\documentclass[10pt]{amsart}
\usepackage{amsmath,amssymb,amsfonts,amsthm,amsopn}
\usepackage{latexsym}
\usepackage{mathrsfs}
\usepackage{geometry}
\usepackage{xcolor}
\usepackage{color, colortbl}
\usepackage{pb-diagram}
\usepackage[title]{appendix}
\usepackage{tikz-cd}
\usepackage{tikz}
\usepackage{appendix}
\usepackage{mathtools}
\usepackage{booktabs}
\usepackage{enumerate}
\usepackage{hyperref}
\usepackage{multirow}
\usetikzlibrary{arrows.meta}
\usetikzlibrary{angles, quotes, arrows.meta, patterns, decorations.pathreplacing, calc, intersections}

\mathtoolsset{showonlyrefs}

\usepackage{multirow}

\newtheorem{theorem}{Theorem}[section]
\newtheorem{lemma}[theorem]{Lemma}
\newtheorem{corollary}[theorem]{Corollary}
\newtheorem{proposition}[theorem]{Proposition}
\newtheorem{definition}[theorem]{Definition}

\newtheorem{remark}[theorem]{Remark}
\theoremstyle{definition}

\newcommand{\beqa}{\begin{eqnarray*}}
	\newcommand{\eeqa}{\end{eqnarray*}}
\newcommand{\HH}{\mathfrak H}

\DeclareMathOperator*{\Sp}{Sp}
\DeclareMathOperator*{\Mp}{Mp}
\DeclareMathOperator*{\Sym}{Sym}
\DeclareMathOperator*{\Op}{{Op}}

\DeclareMathOperator*{\GL}{GL}

\newcommand{\field}[1]{\mathbb{#1}}
\newcommand{\bR}{\field{R}}        %  real numbers
\newcommand{\bN}{\field{N}}        %  natural numbers
\newcommand{\bC}{\field{C}} 
\newcommand{\C}{\field{C}} %  complex numbers
\newcommand{\bF}{\field{F}} 
\newcommand{\cL}{\mathcal{L}}     %  
\def\Fur{\mathscr{F}}              % Calligraphic Letters
\def\cS{\mathcal{S}}
\def\cD{\mathcal{D}}

\def\cB{\mathcal{B}}

\def\cM{\mathcal{M}}

\def\rd{\bR^d}

\def\rdd{{\bR^{2d}}}

\def\R{\right)}

\def\<{\left<}
\def\>{\right>}

\def\ud{\, \mathrm{d}}

\def\mv1{M_v^1}

\def\Shilov{S_{1/2}^{1/2}(\mathbb{R}^d)}
\def\Shilovh{S_{1/2,h}^{1/2}(\mathbb{R}^d)}
\def\Shilovp{S_{1/2}^{1/2}(\mathbb{R}^d)'}

\def\mn{(m,n)}
\def\mn'{(m',n')}

\newcommand{\norm}[1]{\lVert#1\rVert}

\def\w{\mathrm{w}}

\def\N{\mathbb{N}}
\def\R{\mathbb{R}}
\def\Ren{\mathbb{R}^d}

\def\Fur{\mathcal{F}}

\def\f{\varphi}

\def\Sn2{S_{2}(L^{2}(\Ren))}
\def\S1{S_{1}(L^{2}(\Ren))}
\def\sig00{\sigma_{0,0}}

\newcommand{\op}{\mathrm{op}}

\begin{document}

\begin{abstract}
    The oscillator (or metaplectic) semigroup is classically defined as the two-fold cover of the semigroup of positive complex symplectic matrices.
    Although geometrically precise, this definition does not provide an intrinsic operator-theoretic characterization of the bounded operators corresponding to positive symplectic matrices.
    This is in contrast with the real metaplectic group, whose relation with the symplectic group can be expressed directly in terms of the Schr\"odinger representation of the Heisenberg group through its intertwining property.
    In the complex setting, such a relation appears in the existing literature mainly in infinitesimal form or on suitable classes of Gaussian functions.
    In this work we prove that the (complexified) Schr\"odinger intertwining relation holds for every function in $L^2(\R^d)$.
    The main point is the converse statement: if an arbitrary complex symplectic matrix $S$ admits a nonzero bounded operator on $L^2(\R^d)$ satisfying this intertwining relation, then $S$ is necessarily positive and the operator coincides, up to a nonzero scalar, with the corresponding element of the oscillator semigroup.
    Consequently, positive complex symplectic matrices are exactly those admitting a nonzero bounded Schr\"odinger intertwiner, and the associated intertwining space is one-dimensional.
    This provides a Schr\"odinger-representation characterization of the oscillator semigroup, parallel to the classical one for the real metaplectic group.
\end{abstract}
\subjclass[2020]{42B10, 47G30, 22E70, 81S30, 35S30, 46F05}
\keywords{Oscillator semigroup, metaplectic operators, Weyl operators, Schr\"odinger representation, Olshanskii semigroups}

\title[A Schr\"odinger characterization of the oscillator semigroup]{A Schr\"odinger characterization of the oscillator semigroup}

\author{Gianluca Giacchi}
\address{Universit\`a della Svizzera Italiana, Via la Santa 1, 6962 Lugano, Switzerland}
\email{gianluca.giacchi@usi.it}

\maketitle

\section{Introduction}
The purpose of this paper is to reintroduce the {\em oscillator semigroup} as the semigroup of intertwiners for the complexified Schrödinger representation of the Heisenberg group, thereby recovering the classical framework in which metaplectic operators arise as intertwiners of the Schrödinger representation.
As a by-product, the {\em semigroup of positive symplectic matrices} is identified as the subset of complex symplectic matrices for which such a relation can hold for some operator bounded on $L^2$.

\subsection{The metaplectic group and the Schr\"odinger representation}
The metaplectic group $\Mp(d,\bR)$ is the two-fold cover of the real symplectic group $\Sp(d,\bR)$.
Recall that a matrix $U\in\bR^{2d\times2d}$ is symplectic if $U^\top JU=J$, where
\begin{equation}\label{intro.defJ}
    J=\begin{pmatrix} O & I\\
        -I & O
    \end{pmatrix}
\end{equation}
represents the canonical symplectic form of $\rdd$, here denoted as $\sigma$, i.e.,
\begin{equation}\label{intro.defSigma}
    \sigma(z,w)=Jz\cdot w, \qquad z,w\in\rdd.
\end{equation}
The Schr\"odinger representation of the Heisenberg group is the unitary irreducible representation $\rho:\rdd\times\bR\to\mathcal{U}(L^2(\rd))$ defined by
\begin{equation}\label{intro.Schrodingerrep}
    \rho(x,\xi;\tau)f(t)=e^{2\pi i\tau}e^{-i\pi x\cdot\xi}e^{2\pi i\xi\cdot t}f(t-x), \qquad x,\xi\in\rd,\;\tau\in\bR,\;f\in L^2(\rd).
\end{equation}
The intertwiners of this representation are called {\em metaplectic operators}: for every $U\in\Sp(d,\bR)$ there exists an operator $\widehat U\in\mathcal{U}(L^2(\rd))$, unique up to a phase factor (complex constant of modulus 1) such that
\begin{equation}\label{intro.intertU}
    \widehat U\rho(z;\tau)\widehat U^{-1}=\rho(Uz;\tau), \qquad z\in\rdd,\;\tau\in\bR.
\end{equation}
The group $\{c\widehat U:|c|=1,\, U\in\Sp(d,\bR)\}$ has a subgroup that realizes the two-fold cover of $\Sp(d,\bR)$, called the {\em metaplectic group} and here denoted by $\Mp(d,\bR)$.
Therefore, in $\Mp(d,\bR)$ there are precisely two operators for each real symplectic matrix.
Specifically, if $\widehat U\in\Mp(d,\bR)$, then also $-\widehat U\in\Mp(d,\bR)$.
{The metaplectic/oscillator representation and its intertwining interpretation go back to the classical works of Shale and Weil \cite{Shale1962,Weil1964}; see also Howe \cite{Howe1980} and \cite{Folland} for a systematic treatment.}

\subsection{From the real to the complex framework}
There are two main reasons to extend the construction of the metaplectic group to the complex symplectic group $\Sp(d,\bC)$ {\em consistently}.
Recall that a matrix $S\in\bC^{2d\times2d}$ is symplectic if $S^\top JS=J$, exactly as in the real case.

The first reason is practical: the double cover of $\Sp(d,\bR)$ is a group homomorphism, and so when it comes to $\Mp(d,\bR)$, operator calculus traces back to linear algebra, enabling high geometric interpretability of metaplectic operators in terms of the structure of their projection.
In principle, extending the metaplectic representation to complex matrices, means extending these advantages to a wider class of operators.

The second reason is analytical: among metaplectic operators we find the evolution operators of Schr\"odinger Cauchy problems with real quadratic Hamiltonians:
\begin{equation}\label{intro.Schro}
        i\frac{1}{2\pi}\partial_tu={\Op}^\w(a)u,\qquad u(0,\cdot)=u_0\in\cS(\rd),
\end{equation}
where
\begin{equation}
    {\Op}^\w(a)f(x)=\int_{\rdd}f(y)a\big(\frac{x+y}{2},\xi\big)e^{2\pi i(x-y)\cdot\xi}\ud y\ud\xi,\qquad f\in\cS(\rd),
\end{equation}
is the Weyl quantization of the quadratic form $a(z)$, $z\in\rdd$.
Specifically, if $S_t$ is the Hamiltonian flow associated with \eqref{intro.Schro}, the evolution operator $u(t,\cdot)=e^{-2\pi it\Op^\w(a)}u_0$ is metaplectic and has projection $S_t$, i.e., $e^{-2\pi it\Op^\w(a)}=\widehat S_t$, up to a phase.
When considering {\em complex} quadratic Hamiltonians, that is when $a$ in \eqref{intro.Schro} is a complex quadratic form with positive semidefinite imaginary part, the Hamiltonian flow becomes a complex symplectic matrix, and similar considerations are still possible: by extending the metaplectic representation to complex symplectic matrices consistently, one obtains precisely that the Hamiltonian flow is still the projection of the evolution operator.
{This is also what makes such an extension relevant for time-frequency analysis, where metaplectic operators are used systematically to describe function spaces, phase-space representations and propagators of Schr\"odinger equations \cite{CorderoGiacchi2023,CorderoGiacchiRodino2024,NicolaTrapasso}.}

However, there are obstructions in such a consistent extension.
The operator associated with
\begin{equation}\label{intro.defVPT}
    V_P^\top=\begin{pmatrix} I & P\\
        O & I \end{pmatrix},
\end{equation}
when $P$ is a real symmetric matrix is, up to a phase, the Fourier multiplier $\mathfrak m_{-P}f=\Fur^{-1}\big(\Phi_{-P}\widehat f\big)$, where $\Phi_{-P}(x)=e^{-i\pi Px\cdot x}$.
Therefore, a consistent generalization of the metaplectic representation shall associate to $V_P^\top$ in \eqref{intro.defVPT} with $P$ complex and symmetric, the corresponding operator $\mathfrak m_{-P}$, which is defined on $L^2(\rd)$ if and only if $\Im(P)\leq0$.
Another example is provided by the operator associated with
\begin{equation}\label{intro.defDE}
    \cD_E=\begin{pmatrix} E^{-1} & O\\
        O & E^\top \end{pmatrix},
\end{equation}
when $E\in\GL(d,\bR)$.
This is, up to a phase, the unitary rescaling $\mathfrak T_Ef(x)=|\det(E)|^{1/2}f(Ex)$.
This extends consistently to $E\in\GL(d,\bC)$ in terms of pseudodifferential calculus, as
\begin{equation}
    \mathfrak T_Ef(x)=|\det(E)|^{1/2}\int_{\rd}\widehat f(\xi)e^{2\pi iEx\cdot\xi}\ud\xi,
\end{equation}
but this operator is not even defined on $\cS'(\rd)$.

It seems clear that there must be an admissibility criterion for complex symplectic matrices that excludes these pathological cases.

\begin{definition}
    A matrix $S\in\Sp(d,\bC)$ is {\em positive} if the Hermitian form
        \begin{equation}\label{intro.defPos}
            H_S=i^{-1}\big(S^\ast JS-J\big)
        \end{equation}
    is positive semidefinite.
\end{definition}
This defines a semigroup of complex symplectic matrices and it turns out that the real symplectic group is its group of units, $\Sp_+(d,\bC)^\times=\Sp(d,\bR)$.
Similarly to the {\em real} case, this semigroup admits a two-fold cover (as a semigroup) consisting of $L^2$-contractions.
\begin{definition}
    The {\em oscillator} (or metaplectic) {\em semigroup} is the two-fold cover of $\Sp_+(d,\bC)$.
    It is denoted here by $\Mp_+(d,\bC)$.
\end{definition}
If $S\in\Sp_+(d,\bC)$, we denote by $\widehat S$ any corresponding operator in $\Mp_+(d,\bC)$.

\subsection{A time jump}
The semigroup $\Sp_+(d,\bC)$ and its metaplectic realization $\Mp_+(d,\bC)$ have appeared in several equivalent forms over the last century.

Early constructions were given in the Bargmann-Fock space by Kramer, Moshinsky and Seligman \cite{KMS1975} and, subsequently, by Brunet and Kramer \cite{BrunetKramer1980}.
Boundedness of the operators associated with complex symplectic matrices naturally leads there from $\Sp(d,\bR)$ to $\Sp_+(d,\bC)$.
Brunet later developed the corresponding \emph{metaplectic semigroup}, including its covering and contractive operator structure \cite{Brunet1985}.
{The underlying Hilbert space of entire functions and the associated integral transform were introduced by Bargmann \cite{Bargmann1961}, and the action of positive linear transformations on such spaces has been revisited in \cite{Sengupta}.}

In a different language, Ol'shanskii's theory of invariant cones and holomorphic Lie semigroups realizes the same general phenomenon as the holomorphic continuation of unitary representations to semigroups of contractions \cite{Olshanskii1982}.
In particular, for Hermitian groups he characterizes the maximal complex semigroup geometrically by preservation of the associated bounded symmetric domain.
{For a systematic account of Lie semigroups and of their holomorphic representations, we address the monograph by Hilgert and Neeb \cite{HilgertNeeb}.}

Howe's oscillator semigroup \cite{Howe1988} gives the corresponding Gaussian integral-operator realization on $L^2(\R^d)$, an approach proved to be equivalent to Brunet's metaplectic semigroup by Hilgert \cite{Hilgert1989}.

Finally, H\"ormander's positive complex canonical transformations and Gaussian Fourier integral operators provide another closely related formulation \cite{Hormander1995}.
{This formulation relies in turn on the calculus of Fourier integral operators with complex-valued phase functions of Melin and Sj\"ostrand \cite{MelinSjostrand} and on the corresponding $L^2$ estimates \cite{Hormander1983}.}

{More recently, Derezi\'nski and Karczmarczyk have described the oscillator semigroup through the Weyl calculus, as the set of operators of the form $c\cdot\Op^\w(e^{-A})$ with $A$ a quadratic form with positive definite real part; they relate this Gaussian/Weyl realization to the corresponding positive complex symplectic semigroup and compute explicitly operator and trace norms of its elements \cite{DerezinskiKarczmarczyk}.}

Despite their rather different starting points, these works share the common feature that the oscillator semigroup is presented as a two-fold cover of $\Sp_+(d,\bC)$.
In other words, the characterization of $\Mp(d,\bR)$ through the Schr\"odinger representation is not taken as the defining principle of its complex extension.

Nevertheless, the idea that the oscillator semigroup and the Schr\"odinger representation of the Heisenberg group are related is already present in the aforementioned pioneering contributions.
More precisely, let
\begin{equation}\label{intro.blockS}
    S=\begin{pmatrix} A & B\\
        C & D \end{pmatrix}\in\Sp(d,\bC), \qquad A,B,C,D\in\bC^{d\times d},
\end{equation}
and write $(y',\eta')=S(y,\eta)$.
In the notation of the present paper, the infinitesimal intertwining relation associated with $S$ takes the explicit form
\begin{equation}\label{intro.infinitesimal-intertwining}
    \widehat S\bigl( -y\cdot\nabla+2\pi i\eta\cdot t \bigr)f=\bigl(-y'\cdot\nabla+2\pi i\eta'\cdot t\bigr)\widehat S f, \qquad (y,\eta)\in\bC^{2d}.
\end{equation}
Thus the first-order position-momentum operator associated with $(y,\eta)$ is intertwined with the one associated with $(y',\eta')=S(y,\eta)$.
Relation \eqref{intro.infinitesimal-intertwining} is precisely the infinitesimal counterpart of \eqref{intro.intertU}, when $\widehat S\in\Mp(d,\bR)$.

A relation of this type is already contained in Howe's construction of the oscillator semigroup \cite[Section~19]{Howe1988}.
For the real metaplectic group, Howe first establishes the finite conjugation property on the twisted-convolution algebra.
He then passes to the corresponding infinitesimal identities and extends them holomorphically to the oscillator semigroup \cite{Howe1988}.
After translating the twisted differentiations through the Weyl transform and matching the normalization with \eqref{intro.Schrodingerrep}, Howe's relation becomes \eqref{intro.infinitesimal-intertwining}, initially on Schwartz space.
What is extended to $\Mp_+(d,\bC)$ in Howe's work is therefore the infinitesimal relation \eqref{intro.infinitesimal-intertwining}, rather than \eqref{intro.intertU}.

H\"ormander obtains essentially the same intertwining relation on $\cS(\rd)$ directly from the Gaussian kernel of the operator associated with a matrix in $\Sp_+(d,\bC)$ \cite[Proof of Proposition 5.8]{Hormander1995}.

The earlier Bargmann-space approach of Kramer, Moshinsky and Seligman contains the same phenomenon.
Their bounded operators associated with complex symplectic matrices satisfy the analytically continued first-order canonical equations \cite[Equation (5.52)]{KMS1975}.
Transporting these equations back from the Bargmann realization to the Schr\"odinger realization gives, in dimension one, precisely the two relations
\begin{align}
    \widehat S(-\partial_t)=(-a\partial_t+2\pi ict)\widehat S, \qquad \widehat S(2\pi it)=(-b\partial_t+2\pi id t)\widehat S, \qquad S=\begin{pmatrix} a & b\\
        c & d \end{pmatrix}\in{\Sp}_+(1,\bC),
\end{align}
which are the one-dimensional components of \eqref{intro.infinitesimal-intertwining}.
Importantly, they explicitly observe that, for the complex semigroup, these differential relations cannot in general be rewritten as a conjugation identity involving the inverse of $\widehat S$, since that inverse need not be bounded.

More recently, Viola proved that
\begin{equation}
    \widehat S{\Op}^\w(a)={\Op}^\w(a\circ S^{-1})\widehat S,
\end{equation}
holds for polynomial symbols and strictly positive complex canonical transformations, i.e., $S\in\Sp_+(d,\bC)$ with $H_S>0$.
For linear $a$, this is precisely the infinitesimal Schr\"odinger intertwining relation above.
The complexification of $\rho$ is instead considered on functions with Gaussian decay, since in general the complexified operators do not map $\mathcal S(\R^d)$ to $\mathcal S'(\R^d)$ \cite{Viola2017}.

{The problem of deciding which complex symplectic matrices carry a bounded operator is intimately related to the analysis of the solution operators of evolution equations generated by quadratic operators, for which boundedness, compactness and norm estimates have been obtained by Aleman and Viola \cite{AlemanViola} and Viola \cite{ViolaNorm}, while related subelliptic and Gelfand-Shilov regularization properties of quadratic semigroups were studied by Hitrik, Pravda-Starov and Viola \cite{HitrikPravdaStarovViola}.
Theorem \ref{thm:main} below may be regarded as the representation-theoretic counterpart of such boundedness criteria: it replaces the explicit analysis of the operator by a single intertwining property.}

Finally, the intertwining relation for the oscillator semigroup is proved in \cite{GRT} in the following form.
\begin{proposition}\label{intro.propGRT}
    Let $\widehat S\in\Mp_+(d,\bC)$ have projection \eqref{intro.blockS}.
    Then,
    \begin{equation}\label{intro.intertGauss}
        \widehat S\rho(z;\tau)f=\rho(Sz;\tau)\widehat Sf, \qquad z\in\rdd,\;\tau\in\bR
    \end{equation}
    for every function of the form
    \begin{equation}
        f(t)=\sum_{j=1}^N c_j\rho(w_j;\tau_j)e^{-\pi M_jt\cdot t}, \qquad N\geq 1,\; c_j\in\bC,\; w_j\in\rdd,\; \tau_j\in\bR,\; M_j>0.
    \end{equation}
    In particular, the intertwining relation holds densely on $L^2(\rd)$.
\end{proposition}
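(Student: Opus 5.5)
The plan is to reduce everything to the action of $\widehat S$ on a single class of functions: \emph{generalized Gaussians}
\[
g_{M,b,c}(t)=e^{-\pi Mt\cdot t+2\pi b\cdot t+c},\qquad M=M^\top,\ \Re M>0,\ b\in\bC^d,\ c\in\bC .
\]
This class is stable under $\rho(w;\tau)$ for every $w\in\bC^{2d}$, because such a function extends to an entire function and $\rho(w;\tau)$ only translates (possibly by a complex vector), multiplies by $e^{2\pi i\xi\cdot t}$ and multiplies by a constant. In particular both sides of \eqref{intro.intertGauss} are well defined on $\widehat S f$ once we know that $\widehat S$ maps generalized Gaussians to generalized Gaussians. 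Since both sides of \eqref{intro.intertGauss} are linear in $f$, it suffices to treat $f=\rho(w;\tau')e^{-\pi Mt\cdot t}$ with $w\in\rdd$.

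\textbf{Step 1: reduction to $w=0$.} By the composition law $\rho(z;\tau)\rho(w;\tau')=e^{i\pi\sigma(z,w)}\rho(z+w;\tau+\tau')$ (with the paper's sign convention), which remains valid for complex $z,w$ on entire functions, we have $\widehat S\rho(z)\rho(w)g=e^{i\pi\sigma(z,w)}\widehat S\rho(z+w)g$. On the other side, $\rho(Sz)\rho(Sw)\widehat Sg=e^{i\pi\sigma(Sz,Sw)}\rho(S(z+w))\widehat Sg$. The two phases agree because $\sigma(Sz,Sw)=\sigma(z,w)$ for the complex-bilinear form, as $S^\top JS=J$. So it suffices to prove
\[
\widehat S\rho(z)g=\rho(Sz)\widehat Sg,\qquad z\in\rdd,\quad g(t)=e^{-\pi Mt\cdot t}.
\]
The case $z=0$ then also covers $\rho(0;\tau)$, which is trivial.

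\textbf{Step 2: explicit action of $\widehat S$ on generalized Gaussians.} Use the Gaussian kernel realization of $\widehat S$ (Howe/H\"ormander), namely $\widehat Sf(x)=\int K(x,y)f(y)\,dy$ with $K$ a degenerate Gaussian. Alternatively, use a factorization of $S$ into real metaplectic factors, multipliers $V_P^\top$ with $\Im P\le0$ and rescalings, where the action is elementary. Completing the square in the Gaussian integral gives
\[
\widehat S g_{M,b,c}=\gamma\, g_{M',b',c'},
\]
with $M'$ given by the Siegel-type fractional linear action of $S$ on $iM$, with $b'$ linear in $b$, and with $c'$ quadratic in $b$. Positivity of $S$ is exactly what guarantees $\Re M'>0$, so the integral converges.

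\textbf{Step 3: identification of the parameters.} Apply this to $\rho(z)g$, whose parameters $(b,c)$ depend linearly and quadratically on $z$. Both $\widehat S\rho(z)g$ and $\rho(Sz)\widehat Sg$ are then generalized Gaussians with the same $M'$. It remains to match the linear coefficient and the constant. A clean way to organize this is an ODE argument in the parameter $s\in\bR$. Put
\[
F(s)=\widehat S\rho(sz)g-\rho(sSz)\widehat Sg .
\]
Differentiating gives $\frac{d}{ds}\rho(sz)g=L_z\rho(sz)g$, where $L_{(y,\eta)}=-y\cdot\nabla+2\pi i\eta\cdot t$. Since $\rho(sz)g\in\cS(\rd)$, the infinitesimal relation \eqref{intro.infinitesimal-intertwining} applies and yields $F'=L_{Sz}F$ with $F(0)=0$. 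All terms live in the finite-parameter family of functions (polynomial)$\times g_{M',\cdot,\cdot}$. On this family the linear system $F'=L_{Sz}F$ reduces to an ODE for finitely many coefficients, so uniqueness gives $F\equiv0$, and $s=1$ is the claim. As a cross-check, the linear coefficient $b'$ can be computed directly, and matching it amounts to the identity $S^\top JS=J$ rewritten in blocks.

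\textbf{Density and main obstacle.} The final assertion follows because the span of $\rho(w)e^{-\pi|t|^2}$, $w\in\rdd$, is dense in $L^2(\rd)$. Indeed, a function orthogonal to all of them has vanishing short-time Fourier transform with Gaussian window, hence is zero. The main obstacle I expect is Step 2: making the Gaussian integral rigorous when $H_S$ is only semidefinite, so that the kernel is degenerate, and keeping track of the square-root branch in $\gamma$. This constant cancels in the comparison of Step 3 since it appears on both sides, but one must ensure that the same branch of $\widehat S$ is used throughout.
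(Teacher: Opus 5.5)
The paper gives no proof of this proposition. It is quoted from \cite{GRT} and serves only as the dense starting point for Theorem~\ref{thm:full-L2-positive}, so your argument has to stand on its own. It essentially does, by a route that appears nowhere in the paper.

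You take the infinitesimal relation \eqref{intro.infinitesimal-intertwining} on $\cS(\rd)$ (Howe, H\"ormander) as input. You reduce to a centered Gaussian through $\rho(z)\rho(w)=e^{i\pi\sigma(z,w)}\rho(z+w)$ and $\sigma(Sz,Sw)=\sigma(z,w)$; both are correct with the paper's conventions and remain valid for complex parameters on generalized Gaussians. You then integrate the infinitesimal relation along the one-parameter group $s\mapsto\rho(sz;0)$ inside the Gaussian family. This is the reverse of Section~\ref{sec:necessity-main}, where the finite relation is differentiated to obtain annihilation equations (Proposition~\ref{prop:Gaussian-determination}) and then continued holomorphically in the phase-space variable (Proposition~\ref{prop:complex-covariance-gaussian}).

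What your route buys is an explicit derivation of the finite relation on Gaussian orbits from the infinitesimal one available in the earlier literature. Moreover, the normalization of $\widehat S$ never matters, because both curves start at $\widehat Sg$. The price is two external inputs: the infinitesimal relation, and the Gaussian-to-Gaussian property of $\widehat S$ with $M'$ independent of the linear parameter. A factor-by-factor verification, composed with the help of the holomorphic extension on Gaussians, would need only the factorization.

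Two steps should be adjusted.

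First, in Step 3, $F(s)$ is a difference of two Gaussians $e^{-\pi M't\cdot t+2\pi b_j(s)\cdot t+c_j(s)}$ whose linear coefficients are not known to agree. So $F(s)$ does not lie in a finite-dimensional space of polynomials times a fixed Gaussian, and $F'=L_{Sz}F$ does not by itself reduce to a finite linear system. Instead, run the argument on $F_1(s)=\widehat S\rho(sz)g$ and $F_2(s)=\rho(sSz)\widehat Sg$ separately. Each satisfies $F_j'=L_{Sz}F_j$; for $F_1$ this is where \eqref{intro.infinitesimal-intertwining} enters. Write $Sz=(y',\eta')$, divide by the nonvanishing Gaussian, and compare coefficients of the resulting affine function of $t$. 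This gives
\[
b_j'(s)=M'y'+i\eta',\qquad c_j'(s)=-2\pi\,y'\cdot b_j(s),
\]
with common initial data coming from $F_1(0)=F_2(0)=\widehat Sg$. Hence $F_1\equiv F_2$.

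Second, the degenerate-kernel issue you anticipate in Step 2 disappears if you use the generator description of $\Mp_+(d,\bC)$ from \cite{GRT} recalled in Section~\ref{sec:preliminaries}. Factor $\widehat S$, up to sign, into real metaplectic generators and multipliers $\mathfrak p_Q$ with $\Im Q\geq0$. Each factor maps $g_{M,b,c}$ to a nonzero multiple of some $g_{M',b',c'}$ by an elementary Gaussian computation, with $M'$ depending only on $M$. The two facts needed are $\Re M>0\Rightarrow\Re M^{-1}>0$ for the Fourier transform, and $\Re(M-iQ)=\Re M+\Im Q\geq\Re M$ for $\mathfrak p_Q$.
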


\subsection{Present contributions}
The perspective adopted in this work is reversed: we focus on the validity of
\begin{equation}\label{intro.finite-intertwining}
    T\rho(z;\tau)f=\rho(Sz;\tau)Tf, \qquad z\in\R^{2d},\; f\in L^2(\rd),
\end{equation}
where $T\in\mathcal{B}(L^2(\rd))$ and $\rho(Sz;\tau)$ is defined, densely on $L^2$, by complexifying \eqref{intro.Schrodingerrep}.

For complex $S$, this is substantially stronger as a formulation than the infinitesimal identities above: even though $z$ is real, $Sz$ is in general complex, and hence the expression $\rho(Sz;\tau)Tf$ is not a priori an $L^2$-function.
Comparing to Proposition \ref{intro.propGRT}, here we realize the complex Schr\"odinger action in $\Shilovp$, the topological dual of the Gelfand-Shilov class $\Shilov$, which makes \eqref{intro.finite-intertwining} meaningful for every $f\in L^2(\rd)$.

More importantly, we reverse the direction of the classical constructions: throughout this paper, $S$ is not assumed to be positive and $T$ is not assumed to belong to the oscillator semigroup.
We show that the mere existence of a nonzero bounded operator satisfying \eqref{intro.finite-intertwining} already forces $S\in\Sp_+(d,\bC)$ and determines $T$, up to a nonzero scalar, as the corresponding operator in $\Mp_+(d,\bC)$.
Our main result can be synthesized as follows.
\begin{theorem}\label{thm:main}
Let $S\in\Sp(d,\bC)$.
The following statements are equivalent.
\begin{enumerate}[(i)]
    \item $S\in\Sp_+(d,\bC)$.
    \item There exists a nonzero $T\in\cB(L^2(\R^d))$ such that for every $f\in L^2(\rd)$,
        \begin{equation}\label{eq:full-covariance}
            T\rho(z;\tau)f=\rho(Sz;\tau)Tf, \qquad  z\in\rdd,\; \tau\in\bR,
        \end{equation}
        where the equality holds in $\Shilovp$.
\end{enumerate}
If these conditions hold, the vector space of bounded operators satisfying \eqref{eq:full-covariance} is one-dimensional.
More precisely, if $\widehat S$ is any operator associated with $S\in\Sp_+(d,\bC)$, then \eqref{eq:full-covariance} holds for every $f\in L^2(\rd)$ if and only if $T=c\cdot\widehat S$ for some $c\in\bC$.
\end{theorem}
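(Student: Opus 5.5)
The proof splits into existence ((i)$\Rightarrow$(ii)), which is a density argument, and the converse together with uniqueness, which is a kernel computation. Throughout I use the following facts about the complexified operator $\rho(w;\tau)$ for $w=(y,\eta)\in\bC^{2d}$, which acts by
\[
\rho(w;\tau)g(t)=e^{2\pi i\tau}e^{-i\pi y\cdot\eta}e^{2\pi i\eta\cdot t}g(t-y).
\]
It maps $\Shilov$ continuously into itself, because translations and modulations by complex parameters preserve Gelfand--Shilov decay $e^{-c|t|^2}$ and holomorphic extendability. By transposition, $\rho(w;\tau)$ then acts continuously on $\Shilovp$, so the right-hand side of \eqref{eq:full-covariance} makes sense in $\Shilovp$ for every $f$.

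\emph{Existence.} Fix $S\in\Sp_+(d,\bC)$ and $\widehat S\in\Mp_+(d,\bC)$. Proposition \ref{intro.propGRT} gives \eqref{eq:full-covariance} on the dense space of finite combinations of shifted Gaussians. For a general $f\in L^2$, choose $f_n\to f$ in $L^2$ inside this space. Then $\widehat S\rho(z;\tau)f_n\to\widehat S\rho(z;\tau)f$ in $L^2$, since both $\widehat S$ and $\rho(z;\tau)$ (with $z$ real) are bounded. On the other side, $\widehat Sf_n\to\widehat Sf$ in $L^2\subset\Shilovp$, and continuity of $\rho(Sz;\tau)$ on $\Shilovp$ gives $\rho(Sz;\tau)\widehat Sf_n\to\rho(Sz;\tau)\widehat Sf$ in $\Shilovp$. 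So the identity passes to the limit. The same argument shows that $c\widehat S$ satisfies \eqref{eq:full-covariance} for every $c\in\bC$.

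\emph{Converse and uniqueness.} Let $T\neq0$ be bounded and satisfy \eqref{eq:full-covariance}. Write its Schwartz kernel $K\in\mathcal S'(\bR^{2d})$, or better in the Gelfand--Shilov dual of $\bR^{2d}$. I take $z=(x,0)$ and $z=(0,\xi)$ separately and differentiate in the real parameters at $0$; this is legitimate because $\tau\mapsto\rho$ is smooth in the dual topology. This yields the infinitesimal relations \eqref{intro.infinitesimal-intertwining} for real $(y,\eta)$, and then for complex $(y,\eta)$ by complex linearity. In kernel form they become a first-order system
\[
\bigl(-y'\cdot\nabla_t+2\pi i\eta'\cdot t\bigr)K(t,s)=\bigl(y\cdot\nabla_s+2\pi i\eta\cdot s\bigr)K(t,s),\qquad (y',\eta')=S(y,\eta).
\]
This is a system of $2d$ independent equations in $2d$ variables. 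The associated Lagrangian is the twisted graph of $S$, so its solutions in the dual space form a space of dimension at most one. In the generic case where $B$ is invertible, the solutions are multiples of a Gaussian $e^{i\pi Q(t,s)}$, where the complex quadratic form $Q$ is read off from $S$ as in Hörmander's kernel formula. Degenerate blocks reduce to this case by composing with the real metaplectic operators $\widehat U$ of \eqref{intro.intertU}: these are unitary and transform \eqref{eq:full-covariance} covariantly. Thus $T$ is, up to a scalar, the Gaussian operator $G_S$ attached to $S$. If $S$ is positive, the space of intertwiners is therefore spanned by $\widehat S$, which is the one-dimensionality claim.

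\emph{Main obstacle.} What remains, and what I expect to be the hardest step, is to show that boundedness of $G_S$ on $L^2$ forces $H_S\ge0$. I would argue by testing on coherent states. With $\varphi(t)=e^{-\pi|t|^2}$, \eqref{eq:full-covariance} gives
\[
T\rho(z;0)\varphi=\rho(Sz;0)T\varphi,
\]
and the left-hand side is bounded in $L^2$ uniformly in $z$. Since $T\varphi$ is a nonzero generalized Gaussian $e^{-\pi Mt\cdot t+\dots}$, the norm $\|\rho(Sz;0)T\varphi\|_2$ can be computed explicitly. It grows like $e^{\pi\,\Im(\cdots)}$, and I expect the exponent to be a quadratic form in $z$ proportional to the negative of a quadratic form built from $H_S$, after also using that $\Re M>0$ is necessary for $T\varphi\in L^2$. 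Uniform boundedness in real $z$ then gives positivity of that form on the real subspace $\bR^{2d}$. The issue is that $H_S$ is Hermitian on $\bC^{2d}$, so positivity on $\bR^{2d}$ alone is not enough. To obtain positivity on all of $\bC^{2d}$, I would additionally vary the test Gaussian (changing $\Re M$, or equivalently conjugating by $\widehat U$), or use the equivalent criterion that $S$ is positive iff it maps the Siegel upper half space into its closure. Boundedness of $T$ applied to every $L^2$ Gaussian $e^{i\pi Zt\cdot t}$ with $\Im Z>0$ forces its image, computed from the kernel as the linear fractional action of $S$ on $Z$, to be again an $L^2$ Gaussian, i.e. $S\cdot Z$ lies in the closed Siegel domain. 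That last criterion is the route I would actually pursue.
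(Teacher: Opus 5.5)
Your existence argument is the paper's own: Theorem~\ref{thm:full-L2-positive} uses density from Proposition~\ref{intro.propGRT} together with weak continuity of the transposed action $\rho(Sz;\tau)$ on $\Shilovp$.

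For uniqueness you classify the whole Schwartz kernel. This can be made to work in $\cS'(\bR^{2d})$, but two points are only sketched: the ``dimension at most one'' claim, and the reduction of singular $B$ via real metaplectic operators. The paper needs none of this. Applying the differentiated relation to a Siegel Gaussian gives $\ud\rho\big(S(y,Zy)\big)T\Phi_Z=0$. Together with $T\Phi_Z\in L^2\setminus\{0\}$, this forces $A+BZ$ to be invertible and $T\Phi_Z=c_Z\Phi_{\mathcal M_S(Z)}$ with $\mathcal M_S(Z)\in\HH_d$ (Proposition~\ref{prop:Gaussian-determination}). Irreducibility then gives uniqueness directly.

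The genuine gap is the step you yourself call the main obstacle. Having reached $\mathcal M_S(\HH_d)\subseteq\HH_d$, you conclude by invoking ``the equivalent criterion that $S$ is positive iff it maps the Siegel upper half space into its closure''. You give no proof and no precise reference. Yet this is the whole content of (ii)$\Rightarrow$(i), and the paper deliberately does not rely on it: it states that M\"obius invariance alone does not yet yield positivity.

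Instead, the paper uses boundedness quantitatively at complex phase-space points. That is exactly what your coherent-state test lacks, since, as you observe, real $z$ only yields $H_S\ge0$ on $\bR^{2d}$. The argument runs as follows.
\begin{enumerate}[(a)]
\item Because $T\Phi_Z$ is a Gaussian, both sides of $T\rho(\zeta;0)\Phi_Z=\rho(S\zeta;0)T\Phi_Z$ are entire $L^2$-valued functions of $\zeta$. Hence the identity holds on all of $\bC^{2d}$ (Proposition~\ref{prop:complex-covariance-gaussian}).
\item The exact norm formula of Lemma~\ref{lem:norm-formula}, applied at $r\zeta$ with $r\to\infty$, gives $F_{\mathcal M_S(Z)}(S\zeta)\le F_Z(\zeta)$. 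Equivalently, $H_S(\zeta)\ge-2g_Z(\Im\zeta)$.
\item Chirped Gaussians (Lemma~\ref{lem:squeeze}) make the right-hand side tend to $0$.
\end{enumerate}

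If you want to keep your route, you must prove the compression criterion yourself. It is in fact true and provable by linear algebra:
\begin{enumerate}[(a)]
\item Every $\zeta$ with $q(\zeta)=i^{-1}\sigma(\zeta,\bar\zeta)>0$ lies in some positive Lagrangian, because $\Sp(d,\bR)$ acts transitively on each level set $\{q=c\}$, $c>0$. Hence $S$ maps $\{q>0\}$, and by continuity $\{q\ge0\}$, into itself.
\item The S-lemma then gives $i^{-1}S^*JS\ge\lambda\, i^{-1}J$ for some $\lambda>0$.
\item The identity $\overline N=N^{-1}$ for $N=\bar S^{-1}S$ pairs positive-type eigenvalues $\mu>\lambda$ with negative-type eigenvalues $\mu^{-1}<\lambda$. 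This forces $\lambda=1$ to be admissible, first for a definite pencil, and then in general by replacing $S$ with $SG_\varepsilon$, where $H_{G_\varepsilon}>0$ and $G_\varepsilon\to I$.
\end{enumerate}
As written, however, the decisive implication is simply asserted rather than proved.
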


The proof combines geometric ideas going back to Howe \cite{Howe1988} and H\"ormander \cite{Hormander1995} with an argument that exploits in an essential way the boundedness of the intertwining operator.
The implication $(ii)\Rightarrow(i)$ is the core of the result.
Starting from \eqref{intro.finite-intertwining}, we first differentiate the intertwining relation \eqref{intro.finite-intertwining} and apply the resulting identities to Siegel Gaussians, i.e., functions of the form $\Phi_M(t)=e^{i\pi Mt\cdot t}$ with $M\in\bC^{d\times d}$ symmetric and $\Im(M)>0$.
Their annihilation equations determine positive Lagrangian subspaces, and allow us to prove that $T$ maps every Siegel Gaussian into another Siegel Gaussian, a fact that is well-known for the metaplectic representation \cite{Folland}.
In particular, the existence of a bounded intertwiner implies that the M\"obius action induced by $S$, that is $\cM_S(Z)=(C+DZ)(A+BZ)^{-1}$, preserves the Siegel upper half-space $\HH_d$.

This property alone does not yet yield the full positivity condition defining $\Sp_+(d,\bC)$.
The crucial additional information comes from the boundedness of $T$: after extending the intertwining relation to complex phase-space parameters on Siegel Gaussians and comparing $L^2$-norms, we obtain the positivity inequality for $S$, up to an error depending on the Gaussian.
Varying the Gaussian makes this error arbitrarily small, and hence yields $S\in\Sp_+(d,\bC)$.

The converse implication also strengthens Proposition \ref{intro.propGRT}, where the intertwining relation is proved on the dense subspace generated by real time-frequency shifts of positive Gaussians.
Here, the continuity of the complex Schr\"odinger action on $\Shilovp$ allows us to pass to the $L^2$-closure and hence to obtain \eqref{eq:full-covariance} for every $f\in L^2(\rd)$.
Notice in particular that, although $\rho(Sz;\tau)$ is not in general a bounded operator on $L^2$, its action on the range of $\widehat S$ is represented by the $L^2$-function $\widehat S\rho(z;\tau)f$.

Finally, the one-dimensionality statement follows from the irreducibility of the Schr\"odinger representation.

Theorem \ref{thm:main} therefore contains an intrinsic characterization for positive complex symplectic matrices:
\begin{equation}
    {\Sp}_+(d,\bC)= \left\{S\in\Sp(d,\bC):\text{$S$ admits a nonzero bounded Schr\"odinger intertwiner}\right\}.
\end{equation}
Moreover, for every such $S$ the corresponding intertwining space is one-dimensional.
Thus the Schr\"odinger representation determines directly the operator associated with $S$, up to a constant, and the usual metaplectic semigroup is recovered by choosing the standard two-fold normalization of these projective lines.
{We emphasize the difference with the descriptions of $\Mp_+(d,\bC)$ recalled above, such as \cite{Howe1988,Hormander1995,DerezinskiKarczmarczyk}: there the semigroup is parametrized from the inside, by prescribing the Gaussian kernel or the Weyl symbol of its elements, whereas here it is singled out from the outside, by a property that an arbitrary bounded operator on $L^2(\rd)$ either does or does not satisfy.}
In this sense, Theorem \ref{thm:main} provides for the oscillator semigroup the analogue of the classical Schr\"odinger-representation characterization of the real metaplectic group.

\subsection*{Outline}
The paper is organized as follows.
In Section~\ref{sec:preliminaries} we collect the preliminary material and establish the notation.
In Section~\ref{sec:complex-rho}, we define the complexified Schr\"odinger action on $\Shilovp$, its holomorphic dependence, and identify its infinitesimal form.
Section~\ref{sec:necessity-main} is devoted to the implication from Schr\"odinger intertwining to positivity.
In Section~\ref{sec:characterization}, we prove the converse implication by extending the known dense intertwining relation for the oscillator semigroup to all of $L^2(\R^d)$.
We then prove uniqueness of bounded intertwiners up to a nonzero scalar and obtain the characterization stated in Theorem~\ref{thm:main}.
Finally, Appendix~\ref{appendix:A} contains the proof of the exponential-multiplier lemma for $\Shilov$ used in the construction of the complex Schr\"odinger action.

\section{Prerequisites}\label{sec:preliminaries}
We denote by $x\cdot\xi$ the standard inner product in $\rd$.
If $\alpha,\beta\in\bN^d$ are multi-indices, the notation $\alpha\leq\beta$ means that $\alpha_j\leq\beta_j$ for every $j=1,\ldots,d$.
If $v\in\bC^d$, we shall denote by $\Re(v)$ and $\Im(v)$ its real and imaginary parts, respectively.
A similar notation will be used for matrices $M\in\bC^{d\times d}$.
We denote by $M^\top$ the transpose of $M$, and the set of complex symmetric matrices is $\Sym(d,\bC)$.
The group of $d\times d$ complex invertible matrices is $\GL(d,\bC)$.
Similar notations will be used for real matrices.
The Siegel upper half-space, denoted by $\HH_d$, is the set of matrices $M\in\Sym(d,\bC)$ so that $\Im(M)>0$.
A {\em Siegel Gaussian} is a function $\Phi_M(t)=e^{i\pi Mt\cdot t}$ with $M\in\HH_d$.
We denote by $I$ and $O$ the identity and the zero-matrix, respectively.
We denote by $\mathcal{B}(L^2(\rd))$ the space of bounded linear operators on $L^2(\rd)$, and by $\mathcal{U}(L^2(\rd))$ the space of unitary operators on $L^2(\rd)$.
If $T\in\mathcal{B}(L^2(\rd))$, its operator norm will be denoted by $\norm{T}_{\op}$.
We also denote by $\mathcal{D}'(\rd)$ the topological dual of the space of compactly-supported smooth functions on $\rd$.

We now delve into the symplectic and metaplectic groups, and the other tools we need for our analysis.
For the theory of symplectic matrices and metaplectic operators, we refer to \cite{CorderoBook} or \cite{deGosson}.
The representation theory is summarized in \cite{Berge2022} and \cite{Folland}.
\subsection{Symplectic matrices}
A matrix $S\in\bC^{2d\times2d}$ is symplectic if $S^\top JS=J$, where $J$ is defined as in \eqref{intro.defJ}.
We write $S\in\Sp(d,\bC)$.
By partitioning $S$ into $d\times d$ complex blocks
\begin{equation}\label{blockS}
    S=\begin{pmatrix} A & B\\
        C & D \end{pmatrix}, \qquad A,B,C,D\in\bC^{d\times d},
\end{equation}
and using \eqref{intro.defJ}, it is easy to see that $S\in\Sp(d,\bC)$ if and only if its blocks satisfy
\begin{equation}
    \label{eq:symplectic-block-identities} A^\top C=C^\top A,\qquad B^\top D=D^\top B, \qquad  A^\top D-C^\top B=I.
\end{equation}
Every symplectic matrix $S\in\Sp(d,\bC)$ is invertible and $\det(S)=1$.
We denote by $\Sp(d,\bR)=\Sp(d,\bC)\cap\bR^{2d\times2d}$, and by $\Sp_+(d,\bC)$ the semigroup of complex positive symplectic matrices, see Definition \ref{intro.defPos}.
Let us denote by $\bF$ either $\bR$ or $\bC$.
Then, the group $\Sp(d,\bF)$ is generated by $J$ and by the subgroups of matrices of the form
\begin{equation}\label{defDE}
    \cD_E=\begin{pmatrix} E^{-1} & O\\
        O & E^\top \end{pmatrix}, \qquad E\in\GL(d,\bF)
\end{equation}
and
\begin{equation}\label{defVQ}
    V_Q=\begin{pmatrix} I & O\\
        Q & I \end{pmatrix}, \qquad Q\in\Sym(d,\bF).
\end{equation}

\subsection{Tools from representation theory}
A {\em unitary representation} of a locally compact group $G$ on $L^2(\rd)$ is a group homomorphism $\pi:G\to\mathcal{U}(L^2(\rd))$ so that the mapping $z\in G\mapsto \pi(z)f\in L^2(\rd)$ is continuous for every $f\in L^2(\rd)$.
The {\em cyclic subspace} generated by $f\in L^2(\rd)$ is
\begin{equation}
    \mathcal O_f:=\overline{\mathrm{span}\{\pi(g)f:g\in G\}}\subseteq L^2(\rd).
\end{equation}
If $\mathcal O_f=L^2(\rd)$, we say that $f$ is {\em cyclic}.
The representation $\pi$ is {\em irreducible} when every non-zero function $f\in L^2(\rd)$ is cyclic.

The representation used in this work is the Schr\"odinger representation of the Heisenberg group, defined in \eqref{intro.Schrodingerrep}, which is a unitary and irreducible representation \cite{Folland}.

For later use, the infinitesimal differential expression of \eqref{intro.Schrodingerrep} is
\begin{equation}\label{eq:drho-formal}
  \ud\rho(y,\eta)=-y\cdot\nabla+2\pi i\eta\cdot t, \qquad (y,\eta)\in\bC^{2d},
\end{equation}
initially on $\cS(\R^d)$ and extended complex-linearly in $(y,\eta)$.

\subsection{Metaplectic operators}
For a matrix $U\in\Sp(d,\bR)$,
\begin{equation}
    \rho_U(z;\tau)=\rho(Uz;\tau), \qquad z\in\rdd,\;\tau\in\bR
\end{equation}
defines another irreducible unitary representation of the Heisenberg group, satisfying $\rho_U(0;\tau)=\rho(0;\tau)$ for every $\tau\in\bR$.
By Stone-von Neumann theorem \cite{Folland}, there exists an operator $\widehat U\in\mathcal{U}(L^2(\rd))$ so that the intertwining relation
\begin{equation}\label{intertU}
    \widehat U\rho(z;\tau)\widehat U^{-1}=\rho(Uz;\tau), \qquad z\in\rdd,\;\tau\in\bR
\end{equation}
holds.
The operators satisfying \eqref{intertU} are not unique, however if $\widehat U'$ is another unitary operator satisfying it, then $\widehat U'=c\cdot\widehat U$ for some $c\in\bC$ with $|c|=1$.
These are called {\em metaplectic operators} and form a group under composition.
This group has a subgroup that contains precisely two operators for each $U\in\Sp(d,\bR)$, differing by a sign, called {\em metaplectic group} and herein denoted by $\Mp(d,\bR)$.
The {\em projection} $\pi^{Mp}:\widehat U\in\Mp(d,\bR)\mapsto U\in\Sp(d,\bR)$ is a group homomorphism with kernel $\{\pm\mathrm{id}_{L^2}\}$, and it is a realization of the two-fold cover of $\Sp(d,\bR)$.
The relation \eqref{intertU} characterizes the fibers of the generators of $\Sp(d,\bR)$, hence the generators of $\Mp(d,\bR)$.
Indeed, if $J$ is as in \eqref{intro.defJ}, $\cD_E$ and $V_Q$ are as in \eqref{defDE} and \eqref{defVQ}, respectively, then
\begin{align}\label{defFT}
    \Fur f(\xi)=\widehat f(\xi)=i^{-d/2}\int_{\rd}f(t)e^{-2\pi i\xi\cdot t}\ud t, \qquad f\in\cS(\rd),
\end{align}
the Fourier transform, has $\pi^{Mp}(\Fur)=J$, whereas the operators
\begin{align}\label{defTE}
    &\mathfrak{T}_Ef(t)=i^m|\det(E)|^{1/2}f(Et), \qquad E\in \GL(d,\bR)\\
    \label{defpQ} &\mathfrak{p}_Qf(t)=\Phi_Q(t)f(t), \qquad Q\in\Sym(d,\bR),
\end{align}
defined for $f\in L^2(\rd)$, where $\Phi_Q(t)=e^{i\pi Qt\cdot t}$, satisfy $\pi^{Mp}(\mathfrak T_E)=\cD_E$ and $\pi^{Mp}(\mathfrak p_Q)=V_Q$.
In \eqref{defTE}, $m\pi\equiv\arg\det(E)$ (mod $2\pi$).

\subsection{The oscillator semigroup}
The oscillator semigroup is the two-fold cover of $\Sp_+(d,\bC)$.
The two-fold cover, which is also known as {\em projection}, $\pi^{Mp}_+:\widehat S\in\Mp_+(d,\bC)\to S\in\Sp_+(d,\bC)$ is a semigroup homomorphism which coincides with $\pi^{Mp}$ on $\Mp(d,\bR)$.
It is proved in \cite{GRT} that the generators of $\Mp_+(d,\bC)$ are precisely the Fourier transform \eqref{defFT} and the operators \eqref{defTE} with $E\in\GL(d,\bR)$, together with the operators in \eqref{defpQ}, with $\Sym(d,\bR)$ being replaced by $\HH_d$.
Moreover, if $Q\in \HH_d$, and
\begin{equation}
    \mathfrak p_Qf(t)=\Phi_Q(t)f(t), \qquad f\in L^2(\rd),
\end{equation}
then $\pi_+^{Mp}(\mathfrak p_Q)=V_Q$, defined as in \eqref{defVQ}.
This characterizes completely a realization of $\pi_+^{Mp}$, because it defines it on the generators on $\Mp_+(d,\bC)$.

\subsection{Positive Lagrangians, the Siegel space, and Gaussian states}\label{sec:lagrangian-prelim}
For the theory in this section, we refer to the work by H\"ormander \cite{Hormander1995}.
{See also \cite{Hormander1983,MelinSjostrand} for the role of positive Lagrangian subspaces in the calculus of Fourier integral operators with complex phase.}

A complex subspace $\cL\subset\bC^{2d}$ is Lagrangian if $\dim_\bC \cL=d$ and $\sigma|_{\cL\times \cL}=0$.
It is \emph{positive} if
\begin{equation}\label{eq:positive-lagrangian}
  i^{-1}\sigma(\zeta,\bar\zeta)>0, \qquad 0\neq\zeta\in \cL.
\end{equation}
We denote the set of positive Lagrangian subspaces by $\Lambda_d^+$.
For $Z\in\HH_d$ define
\begin{equation}\label{eq:LZ}
  \cL_Z:=\{(y,Zy):y\in\bC^d\}\subset\bC^{2d}.
\end{equation}
The Gaussian $\Phi_Z(t)=e^{i\pi Zt\cdot t}$ belongs to $\cS(\R^d)$, and
\begin{equation}\label{eq:gaussian-annihilation-prelim}
  \ud\rho(y,Zy)\Phi_Z=0, \qquad y\in\bC^d.
\end{equation}
The special real Gaussian $\f_M(t)=e^{-\pi Mt\cdot t}$ corresponds to $Z=iM$ with $M=M^\top>0$.

\begin{proposition}\label{prop:lagrangian-gaussian}
The map $Z\mapsto \cL_Z$ is a bijection from $\HH_d$ onto $\Lambda_d^+$.
Moreover, if $\cL=\cL_Z\in\Lambda_d^+$, then
\begin{equation}\label{eq:annihilator-line}
  \{u\in\cS'(\R^d):\ud\rho(\zeta)u=0\ \text{for every }\zeta\in \cL\} =\bC\Phi_Z.
\end{equation}
In particular, a positive Lagrangian determines a unique Gaussian line.
\end{proposition}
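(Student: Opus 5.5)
We need to prove Proposition \ref{prop:lagrangian-gaussian}, in three steps: $\cL_Z$ is a positive Lagrangian, every positive Lagrangian is of this form, and the annihilator is one-dimensional.

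\emph{Step 1: $\cL_Z\in\Lambda_d^+$ for $Z\in\HH_d$.} Take $\zeta=(y,Zy)$ and $\zeta'=(y',Zy')$. Then
\begin{equation}
\sigma(\zeta,\zeta')=J\zeta\cdot\zeta'=Zy\cdot y'-y\cdot Zy'=0
\end{equation}
by symmetry of $Z$. The dimension is $d$ because $y\mapsto(y,Zy)$ is injective. For positivity,
\begin{equation}
\sigma(\zeta,\bar\zeta)=Zy\cdot\bar y-y\cdot\bar Z\bar y=(Z-\bar Z)y\cdot\bar y=2i\,\Im(Z)y\cdot\bar y .
\end{equation}
Hence $i^{-1}\sigma(\zeta,\bar\zeta)=2\Im(Z)y\cdot\bar y>0$ whenever $y\neq0$. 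Injectivity of $Z\mapsto\cL_Z$ is immediate, since $\cL_Z$ is the graph of $Z$.

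\emph{Step 2: surjectivity.} Let $\cL\in\Lambda_d^+$ and let $p:\cL\to\bC^d$ be the first-coordinate projection. Suppose $\zeta=(0,\eta)\in\cL$. Then $\sigma(\zeta,\bar\zeta)=J\zeta\cdot\bar\zeta=(\eta,0)\cdot(0,\bar\eta)=0$, so positivity forces $\zeta=0$. Thus $p$ is injective, hence bijective by dimension count. So $\cL$ is the graph $\{(y,Zy)\}$ of a linear map $Z$. The Lagrangian condition gives $Zy\cdot y'=y\cdot Zy'$, i.e.\ $Z=Z^\top$. The computation of Step 1, read backwards, gives $\Im Z>0$.

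\emph{Step 3: the annihilator.} For $\cL=\cL_Z$, first check \eqref{eq:gaussian-annihilation-prelim} directly. We have $\nabla\Phi_Z=2\pi i Zt\,\Phi_Z$, so
\begin{equation}
\ud\rho(y,Zy)\Phi_Z=(-2\pi i\,y\cdot Zt+2\pi i\,Zy\cdot t)\Phi_Z=0 .
\end{equation}
Conversely, let $u\in\cS'$ satisfy $\ud\rho(\zeta)u=0$ for all $\zeta\in\cL_Z$. Taking $y=e_j$ gives the system
\begin{equation}
\partial_ju=2\pi i(Zt)_ju,\qquad j=1,\dots,d.
\end{equation}
Set $v=\Phi_Z^{-1}u=e^{-i\pi Zt\cdot t}u$. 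This product is well defined in $\cD'(\rd)$, because $e^{-i\pi Zt\cdot t}$ is smooth (not temperate, but that is irrelevant in $\cD'$). The Leibniz rule gives $\partial_jv=0$ for all $j$. A distribution with vanishing gradient on the connected set $\rd$ is constant, so $u=c\Phi_Z$. This proves \eqref{eq:annihilator-line}.

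The only delicate point is this last step: dividing by the Gaussian requires working in $\cD'$, and the temperedness of $u$ is not even needed. Equivalently, one can argue that $\Phi_Z^{-1}u$ solves a constant-coefficient system with zero gradient. The final sentence of the proposition then follows from the bijection $Z\mapsto\cL_Z$ together with \eqref{eq:annihilator-line}.
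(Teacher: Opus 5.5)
Your proof is correct and complete, including the use of strict positivity to make the first-coordinate projection injective and the observation that temperedness of $u$ is not needed. The paper gives no proof of this proposition (it defers to H\"ormander's Proposition~5.1 and Howe's Theorem~22.2). However, your Step~3 is exactly the argument the paper runs inside the proof of Proposition~\ref{prop:Gaussian-determination}: divide by the Gaussian in $\cD'(\rd)$, show the quotient has vanishing gradient, and use connectedness of $\rd$. Your Steps~1 and~2 supply, self-containedly, the elementary graph description of positive Lagrangians that the paper leaves to those references.
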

This is the Gaussian/positive-Lagrangian characterization used throughout the paper; compare H\"ormander \cite[Proposition~5.1]{Hormander1995} and Howe \cite[Theorem~22.2]{Howe1988}.
Note that Howe uses the terminology {\em positive polarizations} instead of positive Lagrangians.

\subsection{The Gelfand-Shilov space and holomorphic maps}\label{sec:GS}
We use the critical Roumieu Gelfand-Shilov space $\Shilov$.
{We refer to \cite{GelfandShilov} for the general theory of these spaces and to \cite{NicolaRodino} for their role in global pseudodifferential calculus.}
Following \cite[Definition~2.1(v)]{Petersson}, a function $\varphi\in C^\infty(\R^d)$ belongs to $\Shilov$ if there exist $C,h>0$ such that
\begin{equation}\label{eq:GS-estimate}
  \sup_{t\in\R^d} |t^\beta\partial^\gamma\varphi(t)| \leq C h^{|\beta+\gamma|}(\beta!\gamma!)^{1/2}, \qquad \beta,\gamma\in\N^d.
\end{equation}
For $h>0$, let $\Shilovh$ be the Banach space with norm
\begin{equation}\label{eq:Gh-norm}
  \|\varphi\|_h:=\sup_{t\in\R^d,\, \beta,\gamma\in\N^d} \frac{|t^\beta\partial^\gamma\varphi(t)|}{h^{|\beta+\gamma|}(\beta!\gamma!)^{1/2}}.
\end{equation}
Then
\begin{equation}
  \Shilov=\bigcup_{h>0}\Shilovh
\end{equation}
with its Roumieu inductive-limit topology.
Its topological dual is the space of ultra-tempered distributions.
We denote by $\langle u,\varphi\rangle$ the bilinear distributional pairing $\Shilovp\times\Shilov$.

The choice of the Roumieu space at the critical indices is essential: \cite[Proposition~6.3 and Theorem~6.4]{Petersson} imply $\Shilov\neq\{0\}$ whereas the corresponding critical Beurling space is trivial.
By \cite[Proposition~2.3(a)]{Petersson}, the Fourier transform preserves $\Shilov$.

Since $\Shilov\subset\cS(\rd)\subset L^2(\rd)$ continuously, every $f\in L^2(\R^d)$ defines an element of $\Shilovp$ by duality
\begin{equation}\label{eq:L2-in-Gprime}
  \langle f,\varphi\rangle=\int_{\R^d}f(t)\varphi(t)\ud t.
\end{equation}
Finite linear combinations of Hermite functions belong to $\Shilov$ and are dense in $\cS(\R^d)$ in the Schwartz topology; see e.g., \cite{ToftKhrennikovNilssonNordebo}.
Consequently, we have the following density result.

\begin{lemma}\label{lem:G-dense}
The inclusion $\Shilov\hookrightarrow\cS(\R^d)$ has dense range.
Hence the restriction map
\begin{equation}
  \cS'(\R^d)\longrightarrow\Shilovp
\end{equation}
is injective.
\end{lemma}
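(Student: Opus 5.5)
The first assertion, that $\Shilov\hookrightarrow\cS(\R^d)$ has dense range, I would deduce directly from the fact recalled just before the statement. Finite linear combinations of Hermite functions $h_\alpha$ belong to $\Shilov$ and are dense in $\cS(\R^d)$ for the Schwartz topology. The containment $h_\alpha\in\Shilov$ I take as known: $h_\alpha(t)=p_\alpha(t)e^{-\pi|t|^2}$ with $p_\alpha$ a polynomial, and the estimate \eqref{eq:GS-estimate} is standard for such functions. Density I would justify via the Hermite expansion characterization of $\cS(\R^d)$: $f=\sum_\alpha c_\alpha h_\alpha$ with rapidly decreasing coefficients $(c_\alpha)$, and the Schwartz seminorms are equivalent to the weighted $\ell^2$ norms $\big(\sum_\alpha|c_\alpha|^2(1+|\alpha|)^{2N}\big)^{1/2}$, $N\in\N$. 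The partial sums of the expansion therefore converge to $f$ in every seminorm. Since the image of the inclusion contains this dense subspace, the range is dense.

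For the second assertion I would use the standard duality argument. Let $u\in\cS'(\R^d)$ be such that its restriction to $\Shilov$ vanishes, i.e.\ $\langle u,\varphi\rangle=0$ for every $\varphi\in\Shilov$. Take an arbitrary $\psi\in\cS(\R^d)$. By the first part there is a sequence $\varphi_n\in\Shilov$ (for instance the Hermite partial sums of $\psi$) with $\varphi_n\to\psi$ in $\cS(\R^d)$. Since $u$ is continuous on $\cS(\R^d)$, we get $\langle u,\psi\rangle=\lim_n\langle u,\varphi_n\rangle=0$. Hence $u=0$, so the restriction map is injective.

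I would also check explicitly that the restriction map is well defined, meaning that $u|_{\Shilov}$ is continuous for the Roumieu inductive limit topology. By the universal property of inductive limits, it suffices to check continuity on each Banach space $\Shilovh$. This follows because the inclusion $\Shilovh\hookrightarrow\cS(\R^d)$ is continuous: each Schwartz seminorm $\sup_t|t^\beta\partial^\gamma\varphi|$ is bounded by $h^{|\beta+\gamma|}(\beta!\gamma!)^{1/2}\norm{\varphi}_h$.

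The only nontrivial point is the Schwartz-topology density of Hermite polynomials, which I would simply cite \cite{ToftKhrennikovNilssonNordebo}. Everything else is formal.
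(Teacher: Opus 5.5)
Your proposal is correct and follows the paper's approach. The paper also deduces dense range from the fact that finite linear combinations of Hermite functions lie in $\Shilov$ and are dense in $\cS(\R^d)$, and leaves injectivity of the restriction to the standard duality argument you give; your check that the restriction lands in $\Shilovp$, via continuity of each step $\Shilovh\hookrightarrow\cS(\R^d)$, is a detail the paper leaves implicit.
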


\subsection{Holomorphic maps with values in the Gelfand-Shilov space}

Because $\Shilov$ is not a Banach space, we make explicit the meaning of holomorphic parameter dependence.

\begin{definition}\label{def:G-entire}
Let $E$ be a complex locally convex space.
A map $F:\bC\to E$ is \emph{holomorphic} if, for every $\zeta_0\in\bC$, the limit
\begin{equation}\label{eq:lcs-derivative}
  F'(\zeta_0) =\lim_{\delta\to0} \frac{F(\zeta_0+\delta)-F(\zeta_0)}{\delta}
\end{equation}
exists in the topology of the completion of $E$, where the increment $\delta$ belongs to $\bC$.
It is \emph{entire} if it is holomorphic on all of $\bC$.
\end{definition}
In our applications, $E=\Shilov$, endowed with its Roumieu inductive-limit topology.
Since $\Shilov$ is complete, its completion $\widehat E$ in the sense of \cite[Definition~3.1]{Kruse} can be canonically identified with $E$ itself.
Hence, in our setting, the above limit may simply be understood as convergence in $\Shilov$.

In the concrete maps below, we use the stronger fact that locally in the complex parameter $\zeta$ the Taylor series converges uniformly in one fixed Banach step $\Shilovh$.
For the first part of the statement, we also refer to \cite[Theorem~3.11]{CappielloToftGS}.
The proof is detailed in Appendix \ref{appendix:A}.

\begin{lemma}\label{lem:exp-multiplier}
Let $\lambda\in\bC^d$ and $\varphi\in\Shilov$.
Multiplication by $e^{\lambda\cdot t}$ is a continuous automorphism of $\Shilov$.
Moreover,
\begin{equation}\label{eq:exp-entire}
  \bC\ni\zeta\longmapsto e^{\zeta\lambda\cdot t}\varphi(t)\in\Shilov
\end{equation}
is entire.
More precisely, for every $\zeta_0\in\bC$ and $R>0$ there exists $h>0$ such that
\begin{equation}\label{eq:exp-series}
  e^{\zeta\lambda\cdot t}\varphi(t) =\sum_{n=0}^\infty \frac{(\zeta-\zeta_0)^n}{n!} (\lambda\cdot t)^n e^{\zeta_0\lambda\cdot t}\varphi(t)
\end{equation}
converges uniformly for $|\zeta-\zeta_0|\leq R$ in $\Shilovh$.
In particular,
\begin{equation}\label{eq:exp-derivative}
  \frac{\ud}{\ud\zeta} \big(e^{\zeta\lambda\cdot t}\varphi(t)\big) =(\lambda\cdot t)e^{\zeta\lambda\cdot t}\varphi(t) \quad\text{in }\Shilov.
\end{equation}
\end{lemma}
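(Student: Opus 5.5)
We need to prove Lemma \ref{lem:exp-multiplier}. The natural route is to work entirely at the level of the Banach steps $\Shilovh$ and obtain explicit constants. Two estimates will do everything: one for multiplication by a monomial $(\lambda\cdot t)^n$, and one for multiplication by the exponential $e^{\lambda\cdot t}$ itself. We use the equivalent characterization of $\Shilov$ by separate bounds: $\varphi\in\Shilov$ if and only if $|\varphi(t)|\le Ce^{-\epsilon|t|^2}$ and $|\widehat\varphi(\xi)|\le Ce^{-\epsilon|\xi|^2}$ for some $\epsilon>0$. Alternatively, we have the equivalent bounds $\sup|t^\beta\varphi|\le Ch^{|\beta|}\beta!^{1/2}$ and $\sup|\partial^\gamma\varphi|\le Ch^{|\gamma|}\gamma!^{1/2}$. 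In both cases the passage from $h$ to some larger $h'$ is controlled.

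For the first estimate, take $\varphi\in\Shilovh$ and expand $\partial^\gamma\big((\lambda\cdot t)^n\varphi\big)$ by Leibniz. Each derivative falling on $(\lambda\cdot t)^n$ costs a factor of $|\lambda|n$. Each remaining power of $t$ is absorbed using the norm \eqref{eq:Gh-norm} at the price of the factor $(\beta+k)!^{1/2}\le 2^{(|\beta|+k)/2}\beta!^{1/2}k!^{1/2}$. Combining these gives
\[
\|(\lambda\cdot t)^n\varphi\|_{h'}\le C_d^{\,n}|\lambda|^n h^n\, n!^{1/2}\,\|\varphi\|_h ,
\]
with $h'=c_dh$ for a dimensional constant $c_d$. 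The binomial sums arising from Leibniz are what produce the enlargement of $h$.

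The series \eqref{eq:exp-series} is then dominated in $\Shilov_{1/2,h'}^{1/2}$ by $\sum_n R^n C^n n!^{1/2}/n!$, which is finite for every $R$. This gives uniform convergence on $|\zeta-\zeta_0|\le R$, provided that $\psi=e^{\zeta_0\lambda\cdot t}\varphi$ lies in some $\Shilovh$. Indeed, applying the bound to $\psi$ with $\zeta-\zeta_0$ in place of the parameter shows the series converges in a fixed step. Its sum is the pointwise function $e^{\zeta\lambda\cdot t}\varphi$, because convergence in $\Shilovh$ implies pointwise convergence. Termwise differentiation of a power series that converges uniformly in a Banach space then yields \eqref{eq:exp-derivative}, and holomorphy in the sense of Definition \ref{def:G-entire} follows.

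It remains to show that $e^{\lambda\cdot t}$ maps $\Shilov$ continuously into itself; applied with $\zeta_0\lambda$, this also gives the membership of $\psi$ needed above. The series with $\zeta_0=0$ and $R=1$ already gives this, using only the monomial estimate: it shows $e^{\lambda\cdot t}\varphi\in\Shilov_{1/2,h'}^{1/2}$ with $\|e^{\lambda\cdot t}\varphi\|_{h'}\le K\|\varphi\|_h$. This bound yields continuity on each step, hence continuity on the inductive limit. Since the inverse of multiplication by $e^{\lambda\cdot t}$ is multiplication by $e^{-\lambda\cdot t}$, the map is an automorphism. Therefore all of the work reduces to the monomial estimate.

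The main obstacle is making the combinatorics in the Leibniz/Stirling step produce exactly the $n!^{1/2}$ growth. Only this growth rate, as opposed to $n!$, makes the exponential series converge for all $R$, and it relies on the critical index $1/2$ being shared by the $t$-side and the $\partial$-side. If the direct estimate gets messy, the fallback is to use the Gaussian-decay characterization. There, $e^{\lambda\cdot t}\varphi$ obviously keeps its Gaussian decay. Its Fourier transform is $\widehat\varphi$ analytically continued to $\xi+\lambda/(2\pi i)$. Gaussian decay of this continuation follows from the characterization of $\Shilov$ via entire extensions satisfying $|\varphi(t+is)|\le Ce^{-\epsilon|t|^2+C|s|^2}$. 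Uniformity of these bounds in $\zeta$ would then be tracked through the same constants.
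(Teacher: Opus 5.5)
Your proposal is correct, but it is organized differently from the paper's proof in Appendix A.

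\textbf{The paper's route.} It starts from the Gaussian characterization $|\partial^\mu\varphi(t)|\le CA^{|\mu|}(\mu!)^{1/2}e^{-a|t|^2}$, cited from Petersson. From this it shows directly that $\psi=e^{\zeta_0\lambda\cdot t}\varphi$ obeys a bound of the same type, since the factor $e^{|\zeta_0||\lambda||t|}$ is absorbed by half of the Gaussian. It then estimates the Taylor coefficients $(\lambda\cdot t)^n\psi/n!$ by absorbing $t^\beta$ and $(\lambda\cdot t)^{n-k}$ into the remaining Gaussian, using $\sup_{r\ge0} r^m e^{-a_1r^2/2}\le a_1^{-m/2}(m!)^{1/2}$.

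\textbf{Your route.} You never leave the defining norms $\|\cdot\|_h$. The powers of $t$ are absorbed by the norm itself through $(\beta+\alpha)!\le 2^{|\beta+\alpha|}\beta!\alpha!$. Membership $\psi\in\Shilov$ is not proved separately; you bootstrap it from the series at $\zeta_0=0$, $\zeta=1$.

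\textbf{Common core.} The combinatorics are the same in both: $n!/(n-k)!\le 2^nk!$, $k!\le d^k\nu!$ and $\nu!(\gamma-\nu)!\le\gamma!$. These turn the Leibniz sum into $n!^{1/2}$ growth, so the step you flagged as the main obstacle does go through.

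\textbf{Trade-offs.} Your approach is self-contained, with no appeal to equivalent characterizations of $\Shilov$. It also yields an explicit linear bound $\|e^{\lambda\cdot t}\varphi\|_{h'}\le K\|\varphi\|_h$ between Banach steps, which gives continuity on the inductive limit immediately. The appendix only shows that the multiplier maps $\Shilov$ into itself with inverse $e^{-\lambda\cdot t}$, leaving continuity implicit. The paper's approach, in turn, disposes of the exponential weight in one step and reduces the coefficient estimates to one-variable calculus.

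\textbf{Small correction.} In the terms where $k=|\nu|$ derivatives fall on $(\lambda\cdot t)^n$, the power of $h$ is $h^{|\beta|+|\gamma|+n-2k}$. So your monomial estimate with $h'=c_dh$ requires $h\ge1$ (or $h'=c_d\max\{h,h^{-1}\}$). This is harmless, since $\|\cdot\|_h$ is nonincreasing in $h$. The separate-bounds characterization and the Fourier-side fallback are not needed.
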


\section{The complex Schr\"odinger action on \texorpdfstring{$\Shilovp$}{Shilov}}\label{sec:complex-rho}

For $x,\xi\in\bC^d$, define on $\Shilov$ the complex modulation and translation
\begin{align}
   &M_\xi\varphi(t)=e^{2\pi i\xi\cdot t}\varphi(t),\label{eq:M-test}\\
 & T_x\varphi=\Fur^{-1}\big(e^{-2\pi i x\cdot\omega}\widehat\varphi(\omega)\big). \label{eq:T-test}
\end{align}
By Lemma~\ref{lem:exp-multiplier} and Fourier invariance, both are continuous automorphisms of $\Shilov$.
We extend them to $\Shilovp$ by duality
\begin{align}
  \langle M_\xi u,\varphi\rangle &:=\langle u,M_\xi\varphi\rangle,\label{eq:M-dual}\\
  \langle T_xu,\varphi\rangle &:=\langle u, T_{-x}\varphi\rangle. \label{eq:T-dual}
\end{align}
For real parameters these are the usual modulation and translation of distributions.
For $x,\xi\in\bC^d$ and $\tau\in\bR$, we then set
\begin{equation}\label{eq:rho-Gprime}
  \rho(x,\xi;\tau) :=e^{2\pi i\tau}e^{-i\pi x\cdot\xi} M_\xi T_x \quad\text{on }\Shilovp.
\end{equation}
Thus the same symbol $\rho$ denotes the real Schr\"odinger representation and its complex extension, as it shall not cause confusion.

Since functions in $\Shilov$ display Gaussian decay, together with their Fourier transforms, the following representation result is evident.

\begin{proposition}
\label{prop:agreement}
For $f\in\Shilov$, $x,\xi\in\bC^d$, and $\tau\in\R$, the distribution \eqref{eq:rho-Gprime} is represented by
\begin{equation}
  \rho(x,\xi;\tau)f(t) = e^{2\pi i\tau}e^{-i\pi x\cdot\xi}e^{2\pi i\xi\cdot t} \int_{\R^d}\widehat f(\omega) e^{2\pi i\omega\cdot(t-x)}\ud\omega.
\end{equation}
\end{proposition}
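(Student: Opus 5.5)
We prove Proposition~\ref{prop:agreement} by unwinding the duality definitions \eqref{eq:M-dual}--\eqref{eq:T-dual} and checking that pairing the stated function with an arbitrary test function $\varphi\in\Shilov$ gives the same value.

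\textbf{Step 1: the function is well defined.} First we check that the right-hand side is an honest function of moderate growth. Since $f\in\Shilov$, the Fourier invariance of $\Shilov$ gives $\widehat f\in\Shilov$. Hence $\widehat f$ satisfies a Gaussian bound $|\widehat f(\omega)|\le Ce^{-\epsilon|\omega|^2}$; this bound follows from \eqref{eq:GS-estimate} by summing over $\beta$. Therefore $\omega\mapsto\widehat f(\omega)e^{-2\pi i\omega\cdot x}$ is integrable for every $x\in\bC^d$. By the Fourier inversion convention matching \eqref{eq:T-test}, the integral equals $T_xf(t)$ computed on the test-function level. Lemma~\ref{lem:exp-multiplier} shows this is again in $\Shilov$. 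Multiplying by $e^{2\pi i\xi\cdot t}$ keeps it in $\Shilov$, again by Lemma~\ref{lem:exp-multiplier}. So the displayed function, call it $g$, lies in $\Shilov\subset L^2$ and defines an element of $\Shilovp$ via \eqref{eq:L2-in-Gprime}.

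\textbf{Step 2: modulation is the adjoint of itself.} It remains to show $\langle \rho(x,\xi;\tau)f,\varphi\rangle=\int g\varphi$ for all $\varphi\in\Shilov$. For modulation this is Fubini-free: $\langle M_\xi u,\varphi\rangle=\int u\,e^{2\pi i\xi\cdot t}\varphi$ when $u\in\Shilov$.

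\textbf{Step 3: translation is anti-self-adjoint for the bilinear pairing.} The key identity is
\[
\int_{\rd} (T_xf)(t)\,\varphi(t)\ud t=\int_{\rd} f(t)\,(T_{-x}\varphi)(t)\ud t,\qquad f,\varphi\in\Shilov,\ x\in\bC^d.
\]
We prove it via Parseval in the bilinear form $\int u\varphi=\int \widehat u(\omega)\widehat\varphi(-\omega)\ud\omega$, up to the normalization constant in \eqref{defFT}. This gives
\[
\int \widehat f(\omega)e^{-2\pi ix\cdot\omega}\,\widehat\varphi(-\omega)\ud\omega=\int\widehat f(\omega)\,\big(e^{2\pi i x\cdot\omega'}\widehat\varphi(\omega')\big)\big|_{\omega'=-\omega}\ud\omega.
\]
The right-hand side is exactly the pairing of $f$ with $T_{-x}\varphi$. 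All integrals converge absolutely because of the Gaussian decay of $\widehat f$ and $\widehat\varphi$ against the exponential growth of $e^{\pm2\pi i x\cdot\omega}$. Alternatively, both sides are entire in $x$ by Lemma~\ref{lem:exp-multiplier} and agree for real $x$ by a change of variables, so the identity theorem gives equality for complex $x$.

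\textbf{Step 4: assemble.} Combining Steps 2 and 3 with the scalar prefactor $e^{2\pi i\tau}e^{-i\pi x\cdot\xi}$ yields $\rho(x,\xi;\tau)f=g$ in $\Shilovp$. Lemma~\ref{lem:G-dense} ensures this identification is unambiguous.

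The only delicate point is the absolute convergence and the Fourier conventions in the Parseval step. It is handled by the Gaussian decay of $\Shilov$ functions and of their Fourier transforms.
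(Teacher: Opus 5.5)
Your proof is correct and follows the route the paper intends. The paper gives no argument beyond declaring the result evident from the Gaussian decay of $\Shilov$ functions and their Fourier transforms, and your Steps 1--4 (absolute convergence of the integral, then the transpose identities for $M_\xi$ and for $T_x$ versus $T_{-x}$ via the bilinear Parseval formula) are exactly the unwinding of that remark. One minor point of bookkeeping: because of the factor $i^{-d/2}$ in \eqref{defFT}, the displayed formula equals $T_xf$ only when $\widehat f$ is read as the unnormalized transform, which is consistent with your ``up to the normalization constant''.
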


We now discuss the infinitesimal version of our intertwining relation \eqref{intro.finite-intertwining}.
\begin{proposition}\label{prop:weak-diff}
Let $u\in\Shilovp$ and let $(y,\eta)\in\bC^{2d}$ be fixed.
The map
\begin{equation}\label{eq:weak-entire-orbit}
  \bC\ni\zeta\longmapsto \rho(\zeta y,\zeta\eta;0)u\in\Shilovp
\end{equation}
is weakly entire, meaning that its pairing with every fixed $\varphi\in\Shilov$ is an entire scalar function of $\zeta\in\bC$.
In particular, its restriction to the real axis
\begin{equation}
  \bR\ni r\longmapsto\rho(ry,r\eta;0)u
\end{equation}
is weakly $C^1$ and
\begin{equation}\label{eq:drho-Gprime}
  \left.\frac{\ud}{\ud r}\right|_{r=0} \rho(ry,r\eta;0)u =-y\cdot\nabla u+2\pi i\eta\cdot tu \quad\text{in }\Shilovp.
\end{equation}
\end{proposition}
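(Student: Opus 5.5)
The plan is to reduce everything to the pairing with a fixed test function $\varphi\in\Shilov$. By the duality definitions \eqref{eq:M-dual}, \eqref{eq:T-dual} and \eqref{eq:rho-Gprime}, we have
\begin{equation}
  \langle \rho(\zeta y,\zeta\eta;0)u,\varphi\rangle = e^{-i\pi\zeta^2 y\cdot\eta}\,\langle u, T_{-\zeta y}M_{\zeta\eta}\varphi\rangle .
\end{equation}
The scalar prefactor is entire in $\zeta$. Since $u$ is a continuous linear functional on $\Shilov$, it therefore suffices to show that $\zeta\mapsto T_{-\zeta y}M_{\zeta\eta}\varphi$ is an entire $\Shilov$-valued map in the sense of Definition~\ref{def:G-entire}. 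In fact we want the stronger local statement that its Taylor series converges in a single Banach step $\Shilovh$. That stronger statement lets us differentiate under the pairing, because $u$ restricted to $\Shilovh$ is bounded.

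To establish this, I would use Lemma~\ref{lem:exp-multiplier} twice. First, $\zeta\mapsto M_{\zeta\eta}\varphi=e^{2\pi i\zeta\eta\cdot t}\varphi$ is entire, with locally uniform convergence of the Taylor expansion in some $\Shilovh$. Second, $T_{-\zeta y}=\Fur^{-1}e^{2\pi i\zeta y\cdot\omega}\Fur$ is conjugate, via the Fourier transform, to an exponential multiplier. The Fourier transform is a continuous automorphism of $\Shilov$, and I expect it to map each $\Shilovh$ boundedly into some $S^{1/2}_{1/2,h'}$, which is the standard estimate.

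The main obstacle is the composition, i.e.\ showing that $(\zeta_1,\zeta_2)\mapsto T_{-\zeta_1 y}M_{\zeta_2\eta}\varphi$ is jointly holomorphic and then restricting to the diagonal. The approach I would try first is to expand both exponentials as a double power series in $(\zeta-\zeta_0)$. Each coefficient is of the form $(\eta\cdot t)^n$ and $(y\cdot\nabla)^m$ applied to a fixed element of $\Shilov$, and their norms can be bounded in a slightly larger step $\Shilovh$ by $C\,h^{n+m}(n!\,m!)^{1/2}$-type estimates. This gives absolute convergence after dividing by $n!\,m!$. Alternatively, one can avoid joint estimates: by Hartogs-type separate holomorphy for scalar functions, the map $(\zeta_1,\zeta_2)\mapsto\langle u,T_{-\zeta_1y}M_{\zeta_2\eta}\varphi\rangle$ is holomorphic in each variable separately, since $T_{-\zeta_1 y}$ is a continuous automorphism for fixed $\zeta_1$. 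Hartogs' theorem then makes it jointly holomorphic, and the diagonal restriction is entire. This second route proves weak entirety cleanly, and I would take it.

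For the derivative formula \eqref{eq:drho-Gprime}, weak entirety gives weak $C^1$ on the real axis, and I would differentiate at $r=0$ using the product rule on the three factors:
\begin{equation}
  \frac{\ud}{\ud r}\Big|_{r=0}\langle u, T_{-ry}M_{r\eta}\varphi\rangle e^{-i\pi r^2y\cdot\eta}
  =\langle u, y\cdot\nabla\varphi\rangle+\langle u,2\pi i\eta\cdot t\,\varphi\rangle .
\end{equation}
Here the Gaussian prefactor contributes $0$. The modulation contributes $2\pi i\eta\cdot t\,\varphi$ by \eqref{eq:exp-derivative}. The translation contributes $\frac{\ud}{\ud r}T_{-ry}\varphi|_{r=0}=\Fur^{-1}(2\pi i y\cdot\omega\,\widehat\varphi)=y\cdot\nabla\varphi$, again by \eqref{eq:exp-derivative} transported by $\Fur$. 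Finally, by the definition of distributional derivatives, $\langle u,y\cdot\nabla\varphi\rangle=\langle -y\cdot\nabla u,\varphi\rangle$, which yields $-y\cdot\nabla u+2\pi i\eta\cdot t\,u$ as claimed.
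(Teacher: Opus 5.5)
Your proposal is correct and follows the paper's proof. Both reduce, via the transpose definitions, to the pairing $e^{-i\pi\zeta^2 y\cdot\eta}\langle u,T_{-\zeta y}M_{\zeta\eta}\varphi\rangle$, use Lemma~\ref{lem:exp-multiplier} together with Fourier invariance of $\Shilov$, and compute the derivative at $r=0$ factor by factor, with the Gaussian prefactor contributing nothing. The only difference is that you justify entirety of the composite $\zeta\mapsto\langle u,T_{-\zeta y}M_{\zeta\eta}\varphi\rangle$ by separate holomorphy in $(\zeta_1,\zeta_2)$ and Hartogs' theorem, which also legitimizes your chain-rule computation along the diagonal; the paper skips this step and only notes that each factor is entire.
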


\begin{proof}
For fixed $\varphi\in\Shilov$, the maps $\zeta\mapsto M_{\zeta\eta}\varphi$ and $\zeta\mapsto T_{\zeta y}\varphi$ are entire $\Shilov$-valued maps by Lemma~\ref{lem:exp-multiplier} and Fourier invariance.
The scalar factor $e^{-i\pi\zeta^2y\cdot \eta}$ is entire, hence the pairing of \eqref{eq:weak-entire-orbit} with $\varphi$ is also entire.
For the derivative, restrict to $r\in\R$.
Translation gives
\begin{align*}
 \left\langle \left.\frac{\ud}{\ud r}\right|_{r=0}T_{ry}u,\varphi \right\rangle =\left\langle u, \left.\frac{\ud}{\ud r}\right|_{r=0}T_{-ry}\varphi \right\rangle =\langle u,y\cdot\nabla\varphi\rangle =\langle-y\cdot\nabla u,\varphi\rangle.
\end{align*}
Similarly,
\begin{equation}
 \left\langle \left.\frac{\ud}{\ud r}\right|_{r=0}M_{r\eta}u,\varphi \right\rangle =\langle u,2\pi i\eta\cdot t\varphi\rangle =\langle2\pi i\,\eta\cdot tu,\varphi\rangle.
\end{equation}
The derivative at zero of the scalar factor $e^{-i\pi r^2y\cdot\eta}$ vanishes, and \eqref{eq:drho-Gprime} follows.
\end{proof}

\section{From Schr\"odinger intertwining to positivity}\label{sec:necessity-main}
The purpose of this section is to prove that the existence of a bounded intertwiner implies positivity.

\begin{theorem}\label{thm:necessity}
Let $S\in\Sp(d,\bC)$.
If there exists a nonzero $T\in\cB(L^2(\R^d))$ satisfying \eqref{eq:full-covariance}, then $S\in\Sp_+(d,\bC)$.
\end{theorem}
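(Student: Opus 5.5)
The proof proceeds in three steps: pass to the infinitesimal relation, show that $T$ maps Siegel Gaussians to Siegel Gaussians, and then extract positivity from a norm estimate that uses the boundedness of $T$.

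\emph{Step 1: infinitesimal relation.} Fix $z=(y,\eta)\in\rdd$, replace $z$ by $rz$ with $r\in\bR$, and set $\tau=0$ in \eqref{eq:full-covariance}. Pairing with $\varphi\in\Shilov$ and differentiating at $r=0$ uses Proposition~\ref{prop:weak-diff} on both sides. On the left we need $\rho(rz;0)f$ to be differentiable in $L^2$ for $f\in\cS(\rd)$, so that $T$ commutes with the derivative. On the right we use $\rho(rSz;0)Tf$ with $u=Tf$. This gives
\[
T\,\ud\rho(z)f=\ud\rho(Sz)\,Tf \quad\text{in }\Shilovp,\qquad f\in\cS(\rd),
\]
and by complex linearity in $z$ the identity extends to all $z\in\bC^{2d}$.

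\emph{Step 2: Gaussians go to Gaussians.} Apply Step 1 to $f=\Phi_Z$ with $Z\in\HH_d$. By \eqref{eq:gaussian-annihilation-prelim}, $\ud\rho(\zeta)T\Phi_Z=0$ for every $\zeta\in S\cL_Z$. The space $S\cL_Z$ is Lagrangian because $S$ is symplectic. We next need $T\Phi_Z\neq0$ for some $Z$. If $T\Phi_Z=0$ for one $Z$, then \eqref{eq:full-covariance} gives $T\rho(w;\tau)\Phi_Z=0$ for all $w\in\rdd$, and irreducibility then forces $T=0$. Hence $T\Phi_Z\neq0$ for every $Z\in\HH_d$.

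It remains to show that $S\cL_Z$ is a positive Lagrangian. Write $S\cL_Z=\{((A+BZ)y,(C+DZ)y)\}$. Suppose first that $A+BZ$ is invertible. Then $S\cL_Z$ is the graph of $W=\cM_S(Z)$, a symmetric matrix. Let $u=T\Phi_Z\in L^2$ be nonzero and solve the first-order system $\ud\rho(x,Wx)u=0$: this forces $u=c\,e^{i\pi Wt\cdot t}$, and $u\in L^2$ requires $\Im W>0$. If instead $A+BZ$ is singular, $S\cL_Z$ contains a vector $(0,\eta)$ with $\eta\neq0$, so $\eta\cdot t\,u=0$ and $u$ is supported on a hyperplane, which is impossible for $u\in L^2\setminus\{0\}$. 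Therefore $\cM_S(\HH_d)\subseteq\HH_d$ and $T\Phi_Z=c(Z)\Phi_{\cM_S(Z)}$ with $c(Z)\neq0$.

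\emph{Step 3: positivity from boundedness (main obstacle).} For $w\in\bC^{2d}$, the function $\rho(w;0)\Phi_Z$ is again an $L^2$ Gaussian, so $\rho(w;0)\Phi_Z\in L^2$. By Proposition~\ref{prop:weak-diff}, both sides of
\[
T\rho(w;0)\Phi_Z=\rho(Sw;0)T\Phi_Z
\]
are weakly entire in $w$. They agree for $w$ real, so they agree for all $w\in\bC^{2d}$. Taking $L^2$-norms and using $\|T\|_{\op}<\infty$ gives
\[
\|\rho(Sw)\Phi_{W}\|_2\,|c(Z)|\le\|T\|_{\op}\,\|\rho(w)\Phi_Z\|_2,\qquad W=\cM_S(Z).
\]
These norms are explicit: $\|\rho(w)\Phi_Z\|_2^2=C_Z\exp(Q_Z(\Im w,\Re w))$ for a quadratic form $Q_Z$ of order $2\pi$ times Hermitian data. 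The key computation is that $Q_Z(w)$ equals $\pi\,i^{-1}\sigma(w,\bar w)$ plus a correction that is a Hermitian form built from the projection of $w$ onto $\cL_Z$ along $\overline{\cL_Z}$. Replace $w$ by $sw$ and let $s\to\infty$; the Gaussian constants then drop out and we obtain the quadratic inequality
\[
i^{-1}\sigma(Sw,\overline{Sw})-i^{-1}\sigma(w,\bar w)\ \ge\ -E_Z(w),
\]
with $E_Z$ a nonnegative Hermitian error determined by $Z$ and $\cM_S(Z)$. This step of controlling $E_Z$ is the delicate one. I would take $Z=i\lambda I$. The error involves the norms of the components of $w$ and $Sw$ in the negative Lagrangians $\overline{\cL_Z}$ and $\overline{\cL_W}$, measured with weights that tend to $0$ as $\lambda\to\infty$ (or $\lambda\to0$, according to which component is involved). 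One must check that $W=\cM_S(i\lambda I)$ behaves well enough for the corresponding weight on the $Sw$ side to vanish too. Granting this, letting $\lambda\to\infty$ yields $i^{-1}(S^*JS-J)\ge0$, that is, $S\in\Sp_+(d,\bC)$.
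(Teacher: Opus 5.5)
\paragraph{Verdict.} Steps 1 and 2 are fine and follow the paper's route: you differentiate the intertwining relation, use the annihilation equations of $\Phi_Z$, exclude singular $A+BZ$ by the hyperplane-support argument, and solve $\nabla u=2\pi i Wtu$. Step 3, however, has a genuine gap at the limiting argument. There is also a small, fixable point earlier in Step 3. Proposition~\ref{prop:weak-diff} does not give holomorphy of the left-hand side, because $T$ acts only on $L^2$. You need $w\mapsto\rho(w;0)\Phi_Z$ to be entire as an $L^2$-valued map, and this follows from the explicit Gaussian formula.

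\paragraph{The norm formula.} For $Z=X+iY$ the exact norm is
\[
\|\rho(w;0)\Phi_Z\|_2^2=\|\Phi_Z\|_2^2\exp\bigl(-\pi i^{-1}\sigma(w,\bar w)+2\pi g_Z(\Im w)\bigr),\qquad g_Z(x_1,\xi_1)=x_1\cdot Yx_1+(\xi_1-Xx_1)\cdot Y^{-1}(\xi_1-Xx_1).
\]
So the correction is a nonnegative real quadratic form in $\Im w$ alone, not a Hermitian form in $w$. The $\sigma$-term also enters with the opposite sign to the one you wrote. Comparing norms and letting $s\to\infty$ gives $H_S(w)\ge 2g_W(\Im Sw)-2g_Z(\Im w)$.

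\paragraph{The $W$-side is harmless.} You single out the $W$-side as delicate, but it needs no control. Since $W=\mathcal M_S(Z)\in\HH_d$ by Step 2, $g_W\ge0$, and that term can simply be dropped. This leaves $H_S(w)\ge -2g_Z(\Im w)$.

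\paragraph{The $Z$-side is the real obstruction.} Your choice $Z=i\lambda I$ cannot remove it. Here $2g_{i\lambda I}(x_1,\xi_1)=2\lambda|x_1|^2+2\lambda^{-1}|\xi_1|^2$, whose infimum over $\lambda$ is $4|x_1||\xi_1|>0$ whenever both components of $\Im w$ are nonzero. One weight dies as $\lambda\to\infty$ and the other as $\lambda\to0$, but a single $\lambda$ must serve both. Both sides are homogeneous of degree two under $w\mapsto sw$, so rescaling does not help.

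More generally, real Gaussians $Z=iM$ only reach $2g_{iM}(x_1,\xi_1)\ge4|x_1\cdot\xi_1|$. The phase invariance $H_S(e^{i\theta}w)=H_S(w)$ does not save this for $d\ge2$. Take $w=((1,i),(1,i))$ and write $\Im(e^{i\theta}w)=(x_1,\xi_1)$. Then $x_1=\xi_1=(\sin\theta,\cos\theta)$, so $x_1\cdot\xi_1=1$ for every $\theta$.

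\paragraph{Missing idea.} Choose $Z$ depending on the fixed vector $w$, and allow chirped Gaussians. If $\Im w=(x_1,\xi_1)$ with $x_1\neq0$, take $Z=X+i\varepsilon I$ with $X$ real symmetric and $Xx_1=\xi_1$. Then $g_Z(\Im w)=\varepsilon|x_1|^2\to0$. If $x_1=0$, take $Z=iRI$ with $R\to\infty$. Since $H_S(w)$ does not depend on $Z$, this gives $H_S(w)\ge0$ for every $w\in\bC^{2d}$.
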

The proof of this result requires a deeper study of the action of bounded intertwiners on Gaussians.
As for the oscillator semigroup, these results are known.
However, we stress that we are not assuming operators to be in $\Mp_+(d,\bC)$, for they are only bounded and satisfy the intertwining relation with the Schr\"odinger representation of the complexified Heisenberg group.

\subsection{Gaussians and intertwiners}\label{sec:gaussians}
We first record an elementary consequence of irreducibility \cite[Proposition~1.43]{Folland}.
In our case, this fact can be expressed as follows.

\begin{lemma}\label{lem:T-injective}
Let $S\in\Sp(d,\bC)$ and let $T\in\mathcal{B}(L^2(\mathbb{R}^d))$ be a non-zero operator satisfying \eqref{eq:full-covariance}.
Then $T$ is injective.
\end{lemma}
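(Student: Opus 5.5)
The plan is to show that $\ker T$ is a closed subspace of $L^2(\R^d)$ that is invariant under the real Schr\"odinger representation. Irreducibility will then force $\ker T$ to be either $\{0\}$ or all of $L^2(\R^d)$, and the second option is excluded because $T\neq0$.

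First, $\ker T$ is closed because $T$ is bounded. Now let $f\in\ker T$, $z\in\rdd$ and $\tau\in\bR$. By \eqref{eq:full-covariance},
\begin{equation}
T\rho(z;\tau)f=\rho(Sz;\tau)Tf=\rho(Sz;\tau)0=0 \quad\text{in }\Shilovp.
\end{equation}
Here we use that the complex action \eqref{eq:rho-Gprime} is linear on $\Shilovp$, since it is defined by duality through $M_\xi$ and $T_x$. The left-hand side is the element of $\Shilovp$ induced by the $L^2$ function $T\rho(z;\tau)f$ through \eqref{eq:L2-in-Gprime}. We need to know that an $L^2$ function whose induced functional vanishes on $\Shilov$ is zero. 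This follows from Lemma~\ref{lem:G-dense}: $L^2\hookrightarrow\cS'$ is injective, and so is the restriction $\cS'\to\Shilovp$. Alternatively, the Hermite functions already lie in $\Shilov$ and form an orthonormal basis of $L^2$. Hence $T\rho(z;\tau)f=0$ as an $L^2$ function, so $\rho(z;\tau)f\in\ker T$ for all real $(z;\tau)$, and $\ker T$ is invariant.

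Next, suppose some $0\neq f\in\ker T$ exists. Irreducibility means $f$ is cyclic, so $L^2(\R^d)=\mathcal O_f$. Since $\ker T$ is closed and invariant, $\mathcal O_f\subseteq\ker T$, hence $\ker T=L^2(\R^d)$ and $T=0$, a contradiction. Therefore $T$ is injective.

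The only delicate point is the identification step: the intertwining identity holds only in $\Shilovp$, so vanishing there has to be transferred back to vanishing in $L^2$. The injectivity of $L^2\to\Shilovp$ takes care of this. No property of $S$ is used.
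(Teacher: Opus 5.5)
Your proof is correct and follows the same route as the paper: $\ker T$ is closed and invariant under the real Schr\"odinger representation, so irreducibility together with $T\neq0$ forces $\ker T=\{0\}$. Your extra justification that vanishing in $\Shilovp$ implies vanishing in $L^2$ (via Lemma~\ref{lem:G-dense} or the Hermite basis) makes explicit a step that the paper leaves implicit.
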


\begin{proof}
The closed subspace $\ker T$ is invariant under the action $\rho(z;\tau)$, because if $f\in \ker T\setminus\{0\}$, then
\begin{equation}
  T\rho(z;\tau)f=\rho(Sz;\tau)Tf=0, \qquad z\in\mathbb{R}^{2d},\; \tau\in\bR.
\end{equation}
Thus,
\begin{equation}
    \mathcal O_f=\overline{\mathrm{span}\{\rho(z;\tau)f:z\in\mathbb{R}^{2d}\}}\subseteq\ker(T).
\end{equation}
Since the Schr\"odinger representation of the Heisenberg group is irreducible, $f$ is cyclic, and therefore $\ker T=L^2(\mathbb{R}^d)$, which contradicts $T\neq0$.

\end{proof}

We now prove that the existence of bounded intertwiners for the Schr\"odinger representation, with respect to a complex symplectic matrix $S$, implies that the Möbius map associated with $S$ has values in $\HH_d$.
Recall that we denote $\Phi_Z(t)=e^{i\pi Zt\cdot t}$.

\begin{proposition}\label{prop:Gaussian-determination}
Let $S\in\Sp(d,\bC)$ have blocks \eqref{blockS} and let $T\in\cB(L^2(\R^d))$ be a bounded intertwiner of $S$.
Fix $Z\in\HH_d$ and define
\begin{equation}\label{eq:LNZ}
  L_Z:=A+BZ, \qquad N_Z:=C+DZ.
\end{equation}
Then $L_Z$ is invertible, the matrix
\begin{equation}\label{eq:PhiS}
  \mathcal{M}_S(Z):=N_ZL_Z^{-1} =(C+DZ)(A+BZ)^{-1}
\end{equation}
belongs to $\HH_d$, and
\begin{equation}\label{eq:TphiZ}
  T\Phi_Z=c_Z\Phi_{\cM_S(Z)}
\end{equation}
for some $c_Z\in\bC\setminus\{0\}$.
\end{proposition}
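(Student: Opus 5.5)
Differentiating the intertwining relation \eqref{eq:full-covariance} at $z=0$ gives the infinitesimal relation, which is then fed into the annihilation equations \eqref{eq:gaussian-annihilation-prelim} and Proposition \ref{prop:lagrangian-gaussian}.

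\emph{Step 1: infinitesimal intertwining.} Fix $f\in L^2(\rd)$ and $(y,\eta)\in\rdd$. Apply \eqref{eq:full-covariance} with $z=r(y,\eta)$, $\tau=0$, and pair both sides with a fixed $\varphi\in\Shilov$. The left side is $r\mapsto\langle T\rho(ry,r\eta;0)f,\varphi\rangle$. The right side is $\langle\rho(rS(y,\eta);0)Tf,\varphi\rangle$, and by Proposition \ref{prop:weak-diff} it is differentiable at $r=0$ with derivative $\langle \ud\rho(S(y,\eta))Tf,\varphi\rangle$. Specializing to $f=\Phi_Z\in\cS(\rd)$, the map $r\mapsto\rho(ry,r\eta;0)\Phi_Z$ is differentiable in $L^2$ with derivative $\ud\rho(y,\eta)\Phi_Z$. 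Since $T$ is bounded, the left side has derivative $\langle T\,\ud\rho(y,\eta)\Phi_Z,\varphi\rangle$. Hence
\[
T\,\ud\rho(w)\Phi_Z=\ud\rho(Sw)\,T\Phi_Z \quad\text{in }\Shilovp,\qquad w\in\rdd .
\]
Both sides are complex linear in $w$: on the left, $\ud\rho(w)\Phi_Z\in L^2$ is linear in $w$ and $T$ is linear. So the identity extends to all $w\in\bC^{2d}$. Taking $w=(y,Zy)\in\cL_Z$ and using \eqref{eq:gaussian-annihilation-prelim}, we obtain $\ud\rho(\zeta)\,T\Phi_Z=0$ for every $\zeta\in S\cL_Z$.

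\emph{Step 2: geometry of $S\cL_Z$.} The space $S\cL_Z$ is Lagrangian because $S$ is symplectic, and it equals $\{(L_Zy,N_Zy):y\in\bC^d\}$. We have $u:=T\Phi_Z\neq0$ by Lemma \ref{lem:T-injective}, and $u\in L^2$. The task is to show that an $L^2$ (hence tempered) nonzero solution forces $S\cL_Z$ to be positive. Once that is known, Proposition \ref{prop:lagrangian-gaussian} gives $S\cL_Z=\cL_{Z'}$ for some $Z'\in\HH_d$. A graph over the first factor forces $L_Z$ to be invertible and $Z'=N_ZL_Z^{-1}=\cM_S(Z)$. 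Then \eqref{eq:annihilator-line}, which applies since $u\in\cS'$ and Lemma \ref{lem:G-dense} makes the equations in $\Shilovp$ equivalent to those in $\cS'$, gives $u=c_Z\Phi_{\cM_S(Z)}$ with $c_Z\neq0$.

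\emph{Step 3: positivity (main obstacle).} First show that $L_Z$ is invertible. If $L_Zy=0$ with $y\neq0$, then $v:=N_Zy\neq0$ and $2\pi i\,(v\cdot t)\,u=0$. Hence $u$ is supported in the complex hyperplane condition $v\cdot t=0$. If $\Re v$ and $\Im v$ are independent, this is a codimension-$2$ set; in every case it is a proper subspace of $\rd$, hence Lebesgue null, so $u=0$ in $L^2$, a contradiction.

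So $L_Z$ is invertible and $S\cL_Z=\cL_W$ with $W=\cM_S(Z)$ complex symmetric, since Lagrangian graphs have symmetric slopes. The equations $(-y\cdot\nabla+2\pi i\,Wy\cdot t)u=0$ for all $y$ force $u=c\,e^{i\pi Wt\cdot t}$ as a distribution. This is an ODE system, so solutions are determined up to a constant, and $c\neq0$. This function lies in $L^2$ only if $\Im W>0$. Indeed, if $\Im W$ has a nonpositive eigendirection, then along it $|u|$ does not decay, and the integral over a neighbourhood of that direction diverges. This yields $W\in\HH_d$.

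I expect the rigorous ODE uniqueness step in the weak $\Shilovp$ setting to need care. I would handle it by multiplying $u$ by $e^{-i\pi Wt\cdot t}$ and checking that this product has zero gradient, using Lemma \ref{lem:exp-multiplier}-type arguments on $\Shilov$. If the multiplication by the Gaussian factor is not available in $\Shilovp$, I would instead pass to $\cS'$, where $u\in L^2$ already lives. There the operators are plain differential operators with polynomial coefficients, so the computation is classical.
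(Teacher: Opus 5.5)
Your proposal is correct and follows essentially the paper's route: you differentiate the intertwining relation on $\Phi_Z$, combine it with the annihilation equations to get $\ud\rho(S(y,Zy))\,T\Phi_Z=0$, rule out a kernel of $L_Z$ by the measure-zero argument, and integrate $\nabla u=2\pi i\,\cM_S(Z)t\,u$ to a Gaussian whose square-integrability forces $\Im\cM_S(Z)>0$. Two remarks: your Step~2 detour through Proposition~\ref{prop:lagrangian-gaussian} is unnecessary, since Step~3 proves positivity directly; and the multiplication by $e^{-i\pi Wt\cdot t}$ must be done in $\mathcal{D}'(\R^d)$, as the paper does, not in $\cS'$, because before you know $\Im W>0$ this factor can grow like a Gaussian and is not a tempered multiplier.
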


\begin{proof}
Put $u=T\Phi_Z$.
By Lemma \ref{lem:T-injective}, $u\neq0$.
Write $Z=X+iY$, with $X=X^\top$ and $Y=Y^\top>0$.
For a real direction $v\in\R^{2d}$, differentiate
\begin{equation}
  T\rho(rv;0)\Phi_Z=\rho(rSv;0)u, \qquad r\in\R,
\end{equation}
at $r=0$.
The left-hand side is differentiable in $L^2$; the right-hand side is differentiated in $\Shilovp$ by Proposition \ref{prop:weak-diff}.
We obtain
\begin{equation}\label{eq:diff-covariance-Z}
  T\ud\rho(v)\Phi_Z=\ud\rho(Sv)u, \qquad v\in\R^{2d},
\end{equation}
where as usual
\begin{equation}\label{eq:drhoeta}
  \ud\rho(y,\eta)=-y\cdot\nabla+2\pi i\eta\cdot t
\end{equation}
is used complex-linearly for $(y,\eta)\in\bC^{2d}$.
Fix $y\in\R^d$, and set $v_1=(y,Xy)$ and $v_2=(0,Yy)$.
Since
\begin{equation}
  \nabla\Phi_Z(t)=2\pi iZt\Phi_Z(t),
\end{equation}
and $v=v_1+iv_2=(y,Zy)$, we have
\begin{equation}\label{eq:Z-annihilation}
  \ud\rho(v_1)\Phi_Z+i\ud\rho(v_2)\Phi_Z =\ud\rho(y,Zy)\Phi_Z=0.
\end{equation}
Apply \eqref{eq:diff-covariance-Z} to $v_1$ and $v_2$, multiply the second identity by $i$, and add.
Then, by \eqref{eq:Z-annihilation}, we obtain
\begin{equation}\label{eq:transformed-annihilation-Z}
  \ud\rho\big(S(y,Zy)\big)u=0, \qquad y\in\R^d.
\end{equation}
Since
\begin{equation}
  S\binom{y}{Zy}=\binom{L_Zy}{N_Zy},
\end{equation}
we get, by \eqref{eq:drhoeta}, that
\begin{equation}\label{eq:system-Z-withy}
    -y\cdot\big(-L_Z^\top \nabla u+2\pi iN_Z^\top tu\big)=0, \qquad y\in\mathbb{R}^d,
\end{equation}
that is
\begin{equation}\label{eq:system-Z}
  L_Z^\top\nabla u=2\pi i N_Z^\top t u.
\end{equation}
The equality is first obtained in $\Shilovp$.
Since $u\in L^2$, both sides are tempered distributions, and therefore Lemma \ref{lem:G-dense} yields the equality in $\cS'(\rd)$.

We now prove that $L_Z$ is invertible.
If $L_Zy=0$ for some $y\in\bC^d\setminus\{0\}$, then $N_Zy\neq0$ because $S$ is invertible and $(y,Zy)\neq0$.
By \eqref{eq:system-Z-withy},
\begin{equation}
  (N_Zy)\cdot t u=0 \quad\text{in }\cS'(\rd).
\end{equation}
As $u\in L^2$, this implies that $u$ vanishes almost everywhere outside the zero set of the nonzero complex linear form $t\mapsto(N_Zy)\cdot t$.
That zero set is a proper real linear subspace and has Lebesgue measure zero.
Hence $u=0$, a contradiction.

Set $Q=N_ZL_Z^{-1}$.
The symplectic block identities \eqref{eq:symplectic-block-identities} and the symmetry $Z^\top=Z$ give
\begin{equation}
  L_Z^\top N_Z=N_Z^\top L_Z,
\end{equation}
whence
\begin{equation}\label{eq:Q-sym}
  Q^\top=Q.
\end{equation}
Multiplying \eqref{eq:system-Z} by $L_Z^{-\top}$ yields
\begin{equation}\label{eq:gradient-Q}
  \nabla u=2\pi i Qt u \quad\text{in }\cS'(\rd).
\end{equation}
We regard $u\in\cS'(\R^d)$ as an element of $\mathcal{D}'(\R^d)$ and set
\begin{equation}
  v=e^{-i\pi Qt\cdot t}u\in\mathcal{D}'(\R^d).
\end{equation}
Since $Q=Q^\top$,
\begin{equation}
  \partial_{t_j}(t\cdot Qt)=2(Qt)_j, \qquad j=1,\ldots,d,
\end{equation}
and Leibniz rule gives
\begin{equation}
  \partial_{t_j}v=-2\pi i(Qt)_j e^{-i\pi Qt\cdot t}u +e^{-i\pi Qt\cdot t}\partial_{t_j}u, \qquad j=1,\ldots,d.
\end{equation}
Using
\begin{equation}
  \partial_{t_j}u=2\pi i(Qt)_j u, \qquad j=1,\ldots,d,
\end{equation}
we obtain
\begin{equation}
  \partial_{t_j}v=0,\qquad j=1,\ldots,d.
\end{equation}
Hence $\nabla v=0$ in $\mathcal{D}'(\R^d)$.
By the standard distribution-theoretic fact that a distribution on a connected open set whose first-order derivatives all vanish is constant, and since $\bR^d$ is connected, there exists $c\in\bC$ such that
\begin{equation}
  v=c \qquad\text{in }\mathcal{D}'(\R^d).
\end{equation}
Multiplying by the smooth inverse $e^{i\pi Qt\cdot t}$ yields
\begin{equation}
  u=c\cdot e^{i\pi Qt\cdot t}=c\cdot \Phi_Q \qquad\text{in }\mathcal{D}'(\R^d).
\end{equation}
Hence,
\begin{equation}
  u(t)=c_Ze^{i\pi Qt\cdot t}
\end{equation}
with $c_Z\neq0$.
Since $u$ is a nonzero $L^2$ function and $Q^\top=Q$, it follows that $\Im(Q)>0$.
Therefore $Q\in\HH_d$, proving \eqref{eq:PhiS}--\eqref{eq:TphiZ}.

\end{proof}

Therefore, the intertwining relation, together with the boundedness of $T$, implies that
\begin{equation}\label{eq:Phi-self-map}
  \mathcal{M}_S(\HH_d)\subseteq\HH_d.
\end{equation}

We now continue holomorphically the image of Gaussians through the complexified Schr\"odinger representation, and establish the intertwining relation on Gaussian states.

\begin{lemma}\label{lem:complex-shift-gaussian}
Let $Z\in\HH_d$.
For every $x,\xi\in\bC^d$,
\begin{equation}\label{eq:rho-phiZ-explicit}
  \rho(x,\xi;0)\Phi_Z(t) =\exp\!\Big( i\pi Zt\cdot t +2\pi i(\xi-Zx)\cdot t +i\pi(x\cdot Zx-x\cdot\xi) \Big).
\end{equation}
In particular, $\rho(x,\xi;0)\Phi_Z\in L^2(\R^d)$ for all complex $x,\xi$, and
\begin{equation}
  \bC^{2d}\ni(x,\xi)\longmapsto \rho(x,\xi;0)\Phi_Z\in L^2(\R^d)
\end{equation}
is entire as an $L^2$-valued map.
\end{lemma}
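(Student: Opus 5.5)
The plan is to compute $\rho(x,\xi;0)\Phi_Z$ directly from the definition \eqref{eq:rho-Gprime}, $\rho(x,\xi;0)=e^{-i\pi x\cdot\xi}M_\xi T_x$, and then check integrability and holomorphy of the resulting explicit Gaussian. Since $\Phi_Z$ lies in $\cS(\rd)$ but need not lie in $\Shilov$ when $\Re Z\neq 0$, I would not rely on Proposition~\ref{prop:agreement}. Instead I would identify $T_x\Phi_Z$ by duality: for $\varphi\in\Shilov$,
\[
\langle T_x\Phi_Z,\varphi\rangle=\langle \Phi_Z,T_{-x}\varphi\rangle .
\]
The function $T_{-x}\varphi$ is the entire extension $t\mapsto\varphi(t+x)$ restricted to $\rd$. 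To see this, note that $\widehat\varphi$ has Gaussian decay, so $\Fur^{-1}(e^{2\pi ix\cdot\omega}\widehat\varphi)(t)=\int\widehat\varphi(\omega)e^{2\pi i\omega\cdot(t+x)}\,\ud\omega$, which is entire in $t+x$.

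I then want the contour-shift identity
\[
\int\Phi_Z(t)\varphi(t+x)\,\ud t=\int\Phi_Z(t-x)\varphi(t)\,\ud t,
\]
where $\Phi_Z(t-x)=e^{i\pi Z(t-x)\cdot(t-x)}$ is the polynomial extension. For real $x$ this is just a change of variables. Both sides are entire in $x\in\bC^d$: on the right, $e^{i\pi Z(t-x)\cdot(t-x)}$ has Gaussian decay in $t$, locally uniformly in $x$, because $\Im Z>0$ dominates the linear terms; on the left, $\Phi_Z\in\cS$ and $\varphi(\cdot+x)$ depends holomorphically on $x$ with bounds locally uniform in $x$ (via the Fourier representation). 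By the identity theorem, equality on $\bR^d$ forces equality on $\bC^d$. Hence $T_x\Phi_Z=\Phi_Z(\cdot-x)$ as an element of $\Shilovp$. Multiplication by $e^{2\pi i\xi\cdot t}$ is likewise defined by duality, so $M_\xi T_x\Phi_Z=e^{2\pi i\xi\cdot t}\Phi_Z(t-x)$.

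Next I expand the exponent using the symmetry of $Z$: $Z(t-x)\cdot(t-x)=Zt\cdot t-2Zx\cdot t+x\cdot Zx$. Multiplying by $e^{-i\pi x\cdot\xi}$ then gives exactly \eqref{eq:rho-phiZ-explicit}.

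Finally, the modulus of the right-hand side of \eqref{eq:rho-phiZ-explicit} is $e^{-\pi\Im Z\,t\cdot t}$ times $e^{\ell(t)}$ with $\ell$ real affine. Since $\Im Z>0$, this is square integrable, so the function lies in $L^2$. For entirety I would use Morera or an explicit series: the map $(x,\xi)\mapsto$ this function is holomorphic pointwise, and it is dominated in $L^2$ by $Ce^{-\pi\lambda|t|^2+R|t|}$ on compact sets in $(x,\xi)$, where $\lambda$ is the smallest eigenvalue of $\Im Z$. Dominated convergence then gives complex differentiability in $L^2$; equivalently, the expansion of $e^{2\pi i(\xi-Zx)\cdot t}$ converges in $L^2$ with the same Gaussian weight. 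I expect the main obstacle to be the contour-shift step, namely justifying rigorously the locally uniform bounds for $\varphi(t+x)$ with complex $x$.
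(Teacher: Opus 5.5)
Your proof is correct. For the entirety and $L^2$ parts it coincides with the paper, but for the explicit formula it takes a different route.

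\textbf{The formula.} The paper obtains \eqref{eq:rho-phiZ-explicit} on the Fourier side. By Proposition~\ref{prop:agreement}, $T_x\Phi_Z(t)$ equals $\int\widehat{\Phi_Z}(\omega)e^{2\pi i\omega\cdot(t-x)}\ud\omega$, up to the normalization of $\Fur^{-1}$. Since $\widehat{\Phi_Z}$ is a constant multiple of $\Phi_{-Z^{-1}}$, completing the square in this Gaussian integral with complex linear term gives $\Phi_Z(t-x)$; this is what the paper means by ``Gaussian integration''.

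You work instead on the test-function side. You use only the dual definition \eqref{eq:T-dual}, invariance of the integral under real translations, and the identity theorem in $x$.

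\textbf{Comparison.} Your route avoids computing $\widehat{\Phi_Z}$ and the complex-shift Gaussian integral formula, which is itself an analytic-continuation fact. It also does not depend on Proposition~\ref{prop:agreement}, which the paper states without proof. It applies verbatim to any $L^2$ function whose entire extension has Gaussian bounds locally uniformly in the shift. The paper's route is shorter once those standard Gaussian facts are accepted.

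\textbf{Your reason for the detour is mistaken.} $\Phi_Z\in\Shilov$ for every $Z\in\HH_d$, chirped or not. Indeed $|\Phi_Z(t)|=e^{-\pi\Im(Z)t\cdot t}$, and $\widehat{\Phi_Z}$ is a multiple of $\Phi_{-Z^{-1}}$ with $-Z^{-1}\in\HH_d$. So both have Gaussian decay, and the Gaussian characterization of $\Shilov$ applies. Proposition~\ref{prop:agreement} could therefore have been used directly. This does not affect the validity of your argument.

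\textbf{A small point in the entirety step.} Dominated convergence needs a dominating function for the difference quotients, not only for the function itself. This follows from your Gaussian bound on a slightly larger compact set via Cauchy estimates, or from the power-series alternative you mention. The paper supplies it explicitly by bounding $\partial_{\zeta_j}$ by $C_K(1+|t|)e^{-c|t|^2+C_K|t|}$ and writing the difference quotient as an integral of that derivative.
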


\begin{proof}
Formula \eqref{eq:rho-phiZ-explicit} follows directly from \eqref{eq:rho-Gprime} and Gaussian integration.
If $Z=X+iY$ with $Y>0$, the quadratic part of the real exponent in the modulus is $-\pi Yt\cdot t$.
Complex phase-space parameters only contribute linear terms in $t$ and a scalar factor.
Hence the function is in $L^2$.

Let $Z=X+iY\in\HH_d$, and write $\zeta=(x,\xi)\in\bC^{2d}$.
By \eqref{eq:rho-phiZ-explicit}, the map $\zeta\mapsto\rho(\zeta;0)\Phi_Z(t)$ is entire for every fixed $t\in\R^d$.
We show that it is entire with values in $L^2(\R^d)$.

Let $K\subset\bC^{2d}$ be compact.
From \eqref{eq:rho-phiZ-explicit} there exist $c>0$ and $C_K>0$ such that
\begin{equation}
  |\rho(\zeta;0)\Phi_Z(t)|\leq C_K e^{-c|t|^2+C_K|t|}, \qquad \zeta\in K.
\end{equation}
Moreover, each first complex partial derivative with respect to the phase-space variables is a linear polynomial in $t$, with coefficients bounded on $K$, times $\rho(\zeta;0)\Phi_Z(t)$.
Hence
\begin{equation}
  \bigl|\partial_{\zeta_j} \bigl(\rho(\zeta;0)\Phi_Z(t)\bigr)\bigr| \leq C_K(1+|t|)e^{-c|t|^2+C_K|t|}, \qquad \zeta\in K,
\end{equation}
and the right-hand side belongs to $L^2(\R^d)$.
For fixed $\zeta_0\in \bC^{2d}$, coordinate direction $e_j$ and $h>0$ small enough that $\{\zeta_0+\theta he_j:0\leq\theta\leq1\}$ is contained in $K$,
\begin{equation}
  \frac{ \rho(\zeta_0+he_j;0)\Phi_Z(t) -\rho(\zeta_0;0)\Phi_Z(t) }{h} = \int_0^1 \partial_{\zeta_j} \bigl(\rho(\zeta_0+\theta he_j;0)\Phi_Z(t)\bigr) \ud\theta
\end{equation}
pointwise in $t$.
Dominated convergence therefore shows that the first complex partial derivatives exist and depend continuously on $\zeta$ as $L^2$-valued maps.
The resulting differential is complex linear, so
\begin{equation}
  \zeta\longmapsto\rho(\zeta;0)\Phi_Z
\end{equation}
is holomorphic with values in $L^2(\R^d)$.
Since $\zeta$ is arbitrary, the map is entire.

\end{proof}

\begin{proposition}\label{prop:complex-covariance-gaussian}
Under the assumptions of Proposition \ref{prop:Gaussian-determination}, for every fixed $Z\in\HH_d$,
    \begin{equation}\label{eq:complex-covariance-gaussian}
         T\rho(\zeta;0)\Phi_Z=\rho(S\zeta;0)T\Phi_Z
    \end{equation}
holds in $L^2(\R^d)$ for every $\zeta\in\bC^{2d}$.
\end{proposition}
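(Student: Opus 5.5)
Both sides of \eqref{eq:complex-covariance-gaussian} will be shown to be entire $L^2(\R^d)$-valued functions of $\zeta\in\bC^{2d}$ that agree on $\R^{2d}$. The identity theorem then gives equality everywhere, since $\R^{2d}$ is a totally real set of maximal dimension and a holomorphic function vanishing on it vanishes identically.

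\emph{Left-hand side.} By Lemma~\ref{lem:complex-shift-gaussian}, the map $\zeta\mapsto\rho(\zeta;0)\Phi_Z$ is entire with values in $L^2(\R^d)$. Composing with the bounded operator $T$ preserves holomorphy. Hence $\zeta\mapsto T\rho(\zeta;0)\Phi_Z$ is entire and $L^2$-valued.

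\emph{Right-hand side.} By Proposition~\ref{prop:Gaussian-determination}, $T\Phi_Z=c_Z\Phi_{\cM_S(Z)}$ with $\cM_S(Z)\in\HH_d$. Lemma~\ref{lem:complex-shift-gaussian}, applied to $\cM_S(Z)$ in place of $Z$, shows that $w\mapsto\rho(w;0)\Phi_{\cM_S(Z)}$ is entire into $L^2$. Since $\zeta\mapsto S\zeta$ is complex linear, $\zeta\mapsto\rho(S\zeta;0)T\Phi_Z=c_Z\rho(S\zeta;0)\Phi_{\cM_S(Z)}$ is entire into $L^2$.

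One compatibility point must be checked: $\rho(S\zeta;0)$ is defined on $\Shilovp$ by \eqref{eq:rho-Gprime}, while Lemma~\ref{lem:complex-shift-gaussian} uses the explicit formula \eqref{eq:rho-phiZ-explicit}. The two agree on $\Phi_{\cM_S(Z)}$, since that Gaussian lies in $\Shilov$ and Proposition~\ref{prop:agreement} together with Gaussian integration yields \eqref{eq:rho-phiZ-explicit}. Consequently, the $\Shilovp$-element $\rho(S\zeta;0)T\Phi_Z$ is represented by this $L^2$ function.

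\emph{Agreement on real points.} For $\zeta=z\in\R^{2d}$ (and $\tau=0$), hypothesis \eqref{eq:full-covariance} with $f=\Phi_Z$ gives equality of the two sides in $\Shilovp$. Both sides are $L^2$ functions, and $\Shilov$ is dense in $L^2$ (it contains the Hermite functions), so equality holds in $L^2$.

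\emph{Extension to $\bC^{2d}$.} Fix $g\in L^2$ and consider the scalar function
\[
F(\zeta)=\bigl\langle T\rho(\zeta;0)\Phi_Z-\rho(S\zeta;0)T\Phi_Z,\,g\bigr\rangle .
\]
It is entire on $\bC^{2d}$ and vanishes on $\R^{2d}$. Its Taylor coefficients at $0$ are real partial derivatives, all of which vanish, so $F\equiv0$. As $g$ is arbitrary, the difference is $0$ in $L^2$, which proves \eqref{eq:complex-covariance-gaussian}.

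The main obstacle is the compatibility point above: the right-hand side must be identified as the entire $L^2$-valued map, not merely as a distributional object. This identification is what makes the holomorphic extension take place in $L^2$ rather than only weakly in $\Shilovp$. Proposition~\ref{prop:agreement} handles it because $\Phi_{\cM_S(Z)}\in\Shilov$, which holds since $\Im\cM_S(Z)>0$.
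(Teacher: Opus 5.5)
Your proof is correct and follows the same route as the paper: both sides are entire $L^2$-valued maps by Lemma~\ref{lem:complex-shift-gaussian}, using $T\Phi_Z=c_Z\Phi_{\cM_S(Z)}$ from Proposition~\ref{prop:Gaussian-determination}; they agree on $\R^{2d}$; and the identity theorem applied to scalar pairings gives equality on all of $\bC^{2d}$. Your explicit checks fill in details the paper leaves implicit: that the $\Shilovp$-definition of $\rho(S\zeta;0)$ agrees with the explicit Gaussian formula, and that $\Shilovp$-equality of two $L^2$ functions forces $L^2$-equality. Your vanishing-Taylor-coefficients argument is equivalent to the paper's successive one-variable identity theorem.
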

\begin{proof}
By Proposition \ref{prop:Gaussian-determination}, both $\Phi_Z$ and $T\Phi_Z=c_Z\Phi_{\mathcal{M}_S(Z)}$ are Siegel Gaussians.
Thus both sides of \eqref{eq:complex-covariance-gaussian} are entire $L^2$-valued functions of $\zeta\in\bC^{2d}$ by Lemma \ref{lem:complex-shift-gaussian}.
They agree on $\R^{2d}$ by the assumed intertwining relation.
Composing their difference with an arbitrary continuous complex-linear functional on $L^2$ gives a scalar entire function on $\bC^{2d}$ vanishing on $\R^{2d}$.
The one-variable identity theorem, applied successively to the coordinates, gives the result.
\end{proof}

Let $Z=X+iY\in\HH_d$.
For a real phase-space vector $z_1=(x_1,\xi_1)\in\R^{2d}$, set
\begin{equation}\label{eq:gZ}
  g_Z(z_1) :=x_1\cdot Yx_1 +(\xi_1-Xx_1)\cdot Y^{-1}(\xi_1-Xx_1).
\end{equation}
If $\zeta\in\bC^{2d}$, write uniquely
\begin{equation}\label{eq:zeta-real-imag}
  \zeta=z_0+iz_1, \qquad z_0,z_1\in\R^{2d},
\end{equation}
and define
\begin{equation}\label{eq:FZ}
  F_Z(\zeta):=g_Z(z_1)+\sigma(z_0,z_1).
\end{equation}
The penultimate ingredient for the proof of Theorem \ref{thm:necessity} is the following exact norm growth of the complexified Schr\"odinger representation on Gaussians.

\begin{lemma}\label{lem:norm-formula}
Let $Z\in\HH_d$ and $\zeta\in\bC^{2d}$.
Then
\begin{equation}\label{eq:norm-formula}
  \|\rho(\zeta;0)\Phi_Z\|_2^2 =\|\Phi_Z\|_2^2 e^{2\pi F_Z(\zeta)}.
\end{equation}
Consequently, for every $r\in\R$,
\begin{equation}\label{eq:norm-growth-r}
  \|\rho(r\zeta;0)\Phi_Z\|_2^2 =\|\Phi_Z\|_2^2 e^{2\pi r^2F_Z(\zeta)}.
\end{equation}
\end{lemma}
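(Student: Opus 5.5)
The plan is to compute the norm directly from the explicit formula \eqref{eq:rho-phiZ-explicit} in Lemma \ref{lem:complex-shift-gaussian}, reducing everything to a real Gaussian integral. Write $Z=X+iY$ and $\zeta=(x,\xi)=z_0+iz_1$ with $z_0=(x_0,\xi_0)$, $z_1=(x_1,\xi_1)\in\R^{2d}$. The modulus squared of $\rho(\zeta;0)\Phi_Z(t)$ is $e^{2\Re E(t)}$, where $E(t)$ is the exponent in \eqref{eq:rho-phiZ-explicit}. I will separate three contributions to $\Re E(t)$:
\begin{itemize}
\item the quadratic part $-\pi Yt\cdot t$;
\item the linear part $\Re\big(2\pi i(\xi-Zx)\cdot t\big)=-2\pi\,\Im(\xi-Zx)\cdot t$;
\item the constant part $\Re\big(i\pi(x\cdot Zx-x\cdot\xi)\big)=-\pi\Im(x\cdot Zx-x\cdot\xi)$.
\end{itemize}
A direct expansion gives $\Im(\xi-Zx)=\xi_1-Xx_1-Yx_0=:w$. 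The constant part is $-\pi\big(2x_0\cdot Xx_1+x_0\cdot Yx_0-x_1\cdot Yx_1-x_0\cdot\xi_1-x_1\cdot\xi_0\big)$, using $x\cdot Zx$ with $x=x_0+ix_1$ and the symmetry of $X,Y$.

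Next I complete the square:
\begin{equation}
-\pi Yt\cdot t-2\pi w\cdot t=-\pi Y(t+Y^{-1}w)\cdot(t+Y^{-1}w)+\pi w\cdot Y^{-1}w .
\end{equation}
By translation invariance of Lebesgue measure, the remaining Gaussian integral equals $\int e^{-2\pi Yt\cdot t}\ud t=\|\Phi_Z\|_2^2$. Hence
\begin{equation}
\|\rho(\zeta;0)\Phi_Z\|_2^2=\|\Phi_Z\|_2^2\,e^{2\pi G},
\end{equation}
with $G=w\cdot Y^{-1}w-\Im(x\cdot Zx-x\cdot\xi)$.

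The only real work, and the main obstacle in the sense of bookkeeping, is checking that $G=F_Z(\zeta)$. Set $u=\xi_1-Xx_1$, so that $w=u-Yx_0$. Then
\begin{equation}
w\cdot Y^{-1}w=u\cdot Y^{-1}u-2x_0\cdot u+x_0\cdot Yx_0 .
\end{equation}
Adding the constant term, the $x_0\cdot Yx_0$ terms cancel. The cross terms combine as $-2x_0\cdot\xi_1+2x_0\cdot Xx_1-2x_0\cdot Xx_1+x_0\cdot\xi_1+x_1\cdot\xi_0$, which equals $x_1\cdot\xi_0-x_0\cdot\xi_1$. What remains is
\begin{equation}
G=u\cdot Y^{-1}u+x_1\cdot Yx_1+x_1\cdot\xi_0-x_0\cdot\xi_1 .
\end{equation}
With $\sigma(z,w)=Jz\cdot w$ and $Jz_0=(\xi_0,-x_0)$, we have $\sigma(z_0,z_1)=\xi_0\cdot x_1-x_0\cdot\xi_1$. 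So $G=g_Z(z_1)+\sigma(z_0,z_1)=F_Z(\zeta)$, which proves \eqref{eq:norm-formula}. I will double-check the sign of the constant term against the convention in \eqref{eq:rho-Gprime}, since an error there would flip the sign of the $\sigma$ term.

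For \eqref{eq:norm-growth-r}, replacing $\zeta$ by $r\zeta$ with $r\in\R$ replaces $(z_0,z_1)$ by $(rz_0,rz_1)$. Both $g_Z$ and $\sigma$ are homogeneous quadratic, so $F_Z(r\zeta)=r^2F_Z(\zeta)$, and the second formula follows immediately.
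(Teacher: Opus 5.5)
Your proposal is correct and is essentially the paper's own proof: take the modulus squared of the explicit formula from Lemma \ref{lem:complex-shift-gaussian}, complete the square in $t$ with $w=\Im(\xi-Zx)$, and expand $w\cdot Y^{-1}w-\Im(x\cdot Zx-x\cdot\xi)$ to obtain $g_Z(z_1)+\sigma(z_0,z_1)$, then use real quadratic homogeneity for the scaled identity. Your bookkeeping checks out, including the sign of the constant term, which is consistent with the normalization $e^{-i\pi x\cdot\xi}M_\xi T_x$ and therefore with the sign of the $\sigma$ term.
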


\begin{proof}
Write $\zeta=(x,\xi)$ with
\begin{equation}
  x=x_0+ix_1,\qquad \xi=\xi_0+i\xi_1, \qquad x_j,\xi_j\in\R^d,
\end{equation}
so that $z_j=(x_j,\xi_j)$.
By \eqref{eq:rho-phiZ-explicit}, let
\begin{equation}
  c=\Im(\xi-Zx), \qquad d_0=\Im(x\cdot Zx-x\cdot\xi).
\end{equation}
Then
\begin{equation}
 |\rho(x,\xi;0)\Phi_Z(t)|^2 =e^{-2\pi Yt\cdot t-4\pi c\cdot t-2\pi d_0}.
\end{equation}
Completing the square yields
\begin{equation}\label{eq:norm-intermediate}
  \frac{\|\rho(x,\xi;0)\Phi_Z\|_2^2} {\|\Phi_Z\|_2^2} =\exp\!\Big(2\pi(c\cdot Y^{-1}c-d_0)\Big).
\end{equation}
A direct expansion using $Z=X+iY$ gives
\begin{align*}
 c\cdot Y^{-1}c-d_0 &=x_1\cdot Yx_1 +(\xi_1-Xx_1)\cdot Y^{-1}(\xi_1-Xx_1)+\xi_0\cdot x_1-x_0\cdot\xi_1\\
 &=g_Z(z_1)+\sigma(z_0,z_1)=F_Z(\zeta).
\end{align*}
This proves \eqref{eq:norm-formula}.
Since $F_Z$ is homogeneous of degree two under real scaling, \eqref{eq:norm-growth-r} follows.
\end{proof}

We are now ready to prove this squeezing result for the Gaussian metric $g_Z$.

\begin{lemma}\label{lem:squeeze}
For every fixed $z_1\in\R^{2d}$,
\begin{equation}\label{eq:inf-gZ}
  \inf_{Z\in\HH_d}g_Z(z_1)=0.
\end{equation}
\end{lemma}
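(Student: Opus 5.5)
Since $g_Z(z_1)\geq 0$ trivially (both terms are nonnegative quadratic forms, as $Y>0$ and $Y^{-1}>0$), the statement reduces to exhibiting, for fixed $z_1=(x_1,\xi_1)\in\R^{2d}$, a family $Z_\varepsilon\in\HH_d$ with $g_{Z_\varepsilon}(z_1)\to0$. The plan is to first kill the second term exactly by a choice of the real part $X$, and then shrink the first term by scaling the imaginary part $Y$ in the direction of $x_1$ while keeping the second term zero.

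\emph{Step 1 (case $x_1=0$).} Then $g_Z(z_1)=\xi_1\cdot Y^{-1}\xi_1$. Take $X=O$ and $Y=\lambda I$; then $g_Z(z_1)=\lambda^{-1}|\xi_1|^2\to0$ as $\lambda\to\infty$.

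\emph{Step 2 (case $x_1\neq0$).} Choose a real symmetric $X$ with $Xx_1=\xi_1$. One explicit choice is
\begin{equation}
  X=\frac{\xi_1 x_1^\top+x_1\xi_1^\top}{|x_1|^2}-\frac{(x_1\cdot\xi_1)\,x_1x_1^\top}{|x_1|^4},
\end{equation}
which is symmetric and satisfies $Xx_1=\xi_1+\frac{(x_1\cdot\xi_1)x_1}{|x_1|^2}-\frac{(x_1\cdot\xi_1)x_1}{|x_1|^2}=\xi_1$. Then the second term of $g_Z$ vanishes for every $Y>0$, and $g_Z(z_1)=x_1\cdot Yx_1$. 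Take $Y=\varepsilon I$ with $\varepsilon>0$, so $Z_\varepsilon=X+i\varepsilon I\in\HH_d$ and $g_{Z_\varepsilon}(z_1)=\varepsilon|x_1|^2\to0$.

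Combining both cases with $g_Z\geq0$ gives \eqref{eq:inf-gZ}. There is no real obstacle; the only point needing care is the existence of a \emph{symmetric} real $X$ mapping $x_1$ to $\xi_1$, which the explicit formula above settles. (The infimum is generally not attained when $z_1\neq0$, which is why the later argument must pass to a limit in $Z$.)
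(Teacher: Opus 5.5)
Your proposal is correct and follows essentially the same route as the paper: the same case split on $x_1$, the same explicit symmetric $X$ with $Xx_1=\xi_1$, and the same families $X+i\varepsilon I$ and $iRI$. Your explicit remark that $g_Z\geq0$, which the paper leaves implicit, is what turns ``$g_Z(z_1)$ can be made arbitrarily small'' into the stated equality $\inf_Z g_Z(z_1)=0$.
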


\begin{proof}
Write $z_1=(x_1,\xi_1)$.
If $x_1\neq0$, choose a real symmetric matrix $X$ such that $Xx_1=\xi_1$, such as,
\begin{equation}\label{eq:X-explicit}
  X =\frac{\xi_1x_1^\top+x_1\xi_1^\top}{|x_1|^2} -\frac{x_1\cdot\xi_1}{|x_1|^4}x_1x_1^\top.
\end{equation}
With $Z_\varepsilon=X+i\varepsilon I_d$,
\begin{equation}
  g_{Z_\varepsilon}(z_1)=\varepsilon|x_1|^2\longrightarrow0.
\end{equation}
If $x_1=0$, take $Z_R=iRI_d$; then $g_{Z_R}(0,\xi_1)=R^{-1}|\xi_1|^2\to0$ as $R\to\infty$.
This concludes the proof.

\end{proof}

We are now ready to prove Theorem \ref{thm:necessity}.
\subsection{The proof of Theorem \ref{thm:necessity}}\label{sec:necessity}
Fix an arbitrary $Z\in\HH_d$.
By Proposition \ref{prop:Gaussian-determination},
\begin{equation}
  T\Phi_Z=c_Z\Phi_Q, \qquad Q=\mathcal{M}_S(Z)\in\HH_d,\; c_Z\neq0.
\end{equation}
By Proposition \ref{prop:complex-covariance-gaussian}, for every $\zeta\in\bC^{2d}$ and every real $r$,
\begin{equation}
  T\rho(r\zeta;0)\Phi_Z=c_Z\rho(rS\zeta;0)\Phi_Q.
\end{equation}
Since $T$ is bounded, by taking the norms we get:
\begin{equation}\label{eq:bounded-growth-ineq}
  |c_Z|^2\|\rho(rS\zeta;0)\Phi_Q\|_2^2 \leq \|T\|^2\|\rho(r\zeta;0)\Phi_Z\|_2^2.
\end{equation}
Then Lemma \ref{lem:norm-formula} yields,
\begin{equation}
 |c_Z|^2\|\Phi_Q\|_2^2e^{2\pi r^2F_Q(S\zeta)}\leq\|T\|^2\|\Phi_Z\|_2^2e^{2\pi r^2F_Z(\zeta)}.
\end{equation}
Take logarithms, divide by $2\pi r^2$, and let $|r|\to\infty$.
The constants disappear and we obtain
\begin{equation}\label{eq:F-contraction}
  F_Q(S\zeta)\leq F_Z(\zeta), \qquad \zeta\in\bC^{2d},\ Z\in\HH_d, \quad Q=\mathcal{M}_S(Z).
\end{equation}
Now, fix an arbitrary $\zeta\in\bC^{2d}$, decompose
\begin{equation}
  \zeta=z_0+iz_1, \qquad z_0,z_1\in\R^{2d}
\end{equation}
and write
\begin{equation}
  S\zeta=w_0+iw_1, \qquad w_0,w_1\in\R^{2d}.
\end{equation}
Since
\begin{equation}\label{eq:sigma-zeta-bar}
  \sigma(\zeta,\bar\zeta)=-2i\sigma(z_0,z_1),
\end{equation}
we have
\begin{equation}\label{eq:H-S}
  H_S(\zeta) :=i^{-1}\Big( \sigma(S\zeta,\overline{S\zeta}) -\sigma(\zeta,\bar\zeta) \Big) =2\big(\sigma(z_0,z_1)-\sigma(w_0,w_1)\big).
\end{equation}
On the other hand, \eqref{eq:F-contraction} and the definition of $F$ yield
\begin{align}
  0 &\leq F_Z(\zeta)-F_Q(S\zeta) =g_Z(z_1)-g_Q(w_1) +\sigma(z_0,z_1)-\sigma(w_0,w_1) =g_Z(z_1)-g_Q(w_1)+\frac12H_S(\zeta). \label{eq:key-H-ineq}
\end{align}
Since $Q\in\HH_d$, $g_Q$ is nonnegative.
Hence
\begin{equation}\label{eq:H-lower-gZ}
  H_S(\zeta)\geq -2g_Z(z_1) \qquad\text{for every }Z\in\HH_d.
\end{equation}
Crucially, the left-hand side is independent of $Z$.
By Lemma \ref{lem:squeeze}, choose $Z_n\in\HH_d$ with $g_{Z_n}(z_1)\to0$.
Passing to the limit in \eqref{eq:H-lower-gZ} gives
\begin{equation}
  H_S(\zeta)\geq0.
\end{equation}
Since $\zeta\in\bC^{2d}$ was arbitrary, this is precisely the condition $H_S\geq0$ of Definition \ref{intro.defPos}.
Therefore $S\in\Sp_+(d,\bC)$.

\begin{remark}\label{rem:minimal-covariance}
The proof of Theorem \ref{thm:necessity} does not use \eqref{eq:full-covariance} for arbitrary $f\in L^2$.
It is enough to assume that, for every $Z\in\HH_d$,
\begin{equation}
  T\rho(z;0)\Phi_Z=\rho(Sz;0)T\Phi_Z, \qquad z\in\R^{2d},
\end{equation}
holds in $\Shilovp$.
Thus positivity is already forced by the existence of a bounded operator that intertwines the complexified Schr\"odinger representation $\rho(z;\tau)$ with $z\in\rdd,\tau\in\bR$ only on Gaussian states.
\end{remark}

\begin{remark}\label{rem:all-gaussians}
For the final limit step of the proof, it is important to consider the full family $Z=X+iY\in\HH_d$, including chirped Gaussians with $X\neq0$.
Restricting only to real Gaussians, i.e., $Z=iM$ with $M>0$ does not, in general, allow one to make $g_Z(z_1)$ arbitrarily small for a prescribed phase-space vector $z_1=(x_1,\xi_1)$ with both components nonzero.
Indeed, the real part $X$ is what permits the cancellation $Xx_1=\xi_1$ in the proof of Lemma \ref{lem:squeeze}.
\end{remark}

\section{Characterization and uniqueness}\label{sec:characterization}

We now prove the converse and, at the same time, upgrade the dense intertwining relation of Proposition \ref{intro.propGRT} to all of $L^2$.

\begin{theorem}\label{thm:full-L2-positive}
Let $\widehat S\in\Mp_+(d,\C)$ have projection $S$.
Then, for every $f\in L^2(\R^d)$, $z\in\R^{2d}$ and $\tau\in\R$,
\begin{equation}\label{eq:full-L2-positive}
  \widehat S\rho(z;\tau)f=\rho(Sz;\tau)\widehat Sf \qquad\text{in }\Shilovp.
\end{equation}
Moreover, the right-hand side is represented by the $L^2$ function on the left.
Equivalently,
\begin{equation}\label{eq:range-domain}
  \mathrm{range}(\widehat S) \subseteq \{u\in L^2:\rho(Sz;\tau)u\text{ is represented by an }L^2\text{ function}\},
\end{equation}
and on all of $L^2$,
\begin{equation}
  \rho(Sz;\tau)\widehat S=\widehat S\rho(z;\tau)\in\cB(L^2(\R^d)).
\end{equation}
\end{theorem}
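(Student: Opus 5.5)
Our plan is to extend Proposition~\ref{intro.propGRT} from its dense subspace to all of $L^2(\rd)$, by combining boundedness of $\widehat S$ with continuity of the complexified action on $\Shilovp$. Fix $z\in\rdd$ and $\tau\in\bR$. Let $\mathcal V$ be the span of the functions $\rho(w;\sigma)\f_M$ with $w\in\rdd$, $\sigma\in\bR$, $M>0$. Proposition~\ref{intro.propGRT} gives \eqref{eq:full-L2-positive} for every $f\in\mathcal V$, and $\mathcal V$ is dense in $L^2(\rd)$. We first check that the equality there, which holds between $L^2$ functions or pointwise, is also an equality in $\Shilovp$. For $g\in\mathcal V$, the function $\widehat S g$ is a finite combination of Gaussians, and $\rho(Sz;\tau)\widehat Sg$ can be computed via Proposition~\ref{prop:agreement} or Lemma~\ref{lem:complex-shift-gaussian}. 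This identifies the pointwise formula with the duality definition \eqref{eq:rho-Gprime}, since both are obtained from the Gaussian integral.

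Next, take $f\in L^2(\rd)$ and $f_n\in\mathcal V$ with $f_n\to f$ in $L^2$. On the left-hand side, $\rho(z;\tau)$ is unitary and $\widehat S$ is bounded, so $\widehat S\rho(z;\tau)f_n\to\widehat S\rho(z;\tau)f$ in $L^2$. By \eqref{eq:L2-in-Gprime} and the continuous embedding $\Shilov\subset L^2$, this convergence also holds weakly in $\Shilovp$. On the right-hand side, $\widehat Sf_n\to\widehat Sf$ in $L^2$, and hence weakly in $\Shilovp$. The operator $\rho(Sz;\tau)$ on $\Shilovp$ is the transpose of a continuous automorphism of $\Shilov$: by \eqref{eq:M-dual}--\eqref{eq:T-dual} and Lemma~\ref{lem:exp-multiplier}, it is a scalar times $\varphi\mapsto T_{-x}M_\xi\varphi$ with $(x,\xi)=Sz$. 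Therefore, for every $\varphi\in\Shilov$,
\[
\langle\rho(Sz;\tau)\widehat Sf_n,\varphi\rangle=\langle\widehat Sf_n,\psi\rangle\longrightarrow\langle\widehat Sf,\psi\rangle=\langle\rho(Sz;\tau)\widehat Sf,\varphi\rangle,
\]
where $\psi\in\Shilov$ is the fixed test function obtained by applying the transpose to $\varphi$. Passing to the limit in the identity for $f_n$ yields \eqref{eq:full-L2-positive} in $\Shilovp$.

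The remaining assertions follow directly. The distribution $\rho(Sz;\tau)\widehat Sf$ coincides with the $L^2$ function $\widehat S\rho(z;\tau)f$, which gives \eqref{eq:range-domain}. The composite $\rho(Sz;\tau)\widehat S$ equals $\widehat S\rho(z;\tau)$, a product of a bounded operator and a unitary one, and so lies in $\cB(L^2(\rd))$.

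We expect the main obstacle to be the careful justification that the complex modulation and translation extend to \emph{continuous} transposes on $\Shilov$. This needs $T_x$ for complex $x$ to be an automorphism of $\Shilov$, which follows from Fourier invariance together with Lemma~\ref{lem:exp-multiplier}. It also needs the right sign conventions in \eqref{eq:T-dual}, so that the $\Shilovp$ definition agrees with the explicit Gaussian formulas used in Proposition~\ref{intro.propGRT}. Once these are settled, the approximation step is a routine weak-continuity argument.
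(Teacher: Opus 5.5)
Your proposal is correct and follows essentially the same route as the paper: you take $L^2$-approximants from the dense Gaussian span of Proposition~\ref{intro.propGRT}, use the $L^2$-convergence (hence weak convergence in $\Shilovp$) of both $\widehat S\rho(z;\tau)f_n$ and $\widehat Sf_n$, and pass to the limit using the weak continuity of $\rho(Sz;\tau)$ on $\Shilovp$ as the transpose of the continuous map $T_{-x}M_\xi$ on $\Shilov$. Your additional check that the dense identity, originally pointwise, also holds in $\Shilovp$ is left implicit in the paper, and Proposition~\ref{prop:agreement} handles it adequately, since $\widehat S$ maps that span to generalized Gaussians in $\Shilov$.
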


\begin{proof}
The assertion is known to hold densely on $L^2(\rd)$, as stated in Proposition \ref{intro.propGRT}.

Fix $z\in\R^{2d}$ and $\tau\in\R$.
Let $f_n$ belong to that dense subspace and $f_n\to f$ in $L^2$.
Since $\widehat S$ and $\rho(z;\tau)$, $z\in\rdd$, $\tau\in\bR$, are bounded on $L^2$,
\begin{equation}
  \widehat S\rho(z;\tau)f_n \longrightarrow \widehat S\rho(z;\tau)f \quad\text{in }L^2\subset\Shilovp.
\end{equation}
Also,
\begin{equation}
  \widehat Sf_n\longrightarrow\widehat Sf \quad\text{in }L^2\subset\Shilovp.
\end{equation}
For fixed complex phase point $Sz$, the operator $\rho(Sz;\tau)$ is weakly continuous on $\Shilovp$ by its transpose construction in \eqref{eq:M-dual}-\eqref{eq:rho-Gprime}.
Therefore
\begin{equation}
  \rho(Sz;\tau)\widehat Sf_n \longrightarrow \rho(Sz;\tau)\widehat Sf \quad\text{weakly in }\Shilovp.
\end{equation}
Passing to the limit in the dense intertwining relation gives \eqref{eq:full-L2-positive}.
The left-hand side belongs to $L^2$, so the right-hand side is represented by the same $L^2$ function.
This proves \eqref{eq:range-domain} as well.

\end{proof}

It remains to prove that the intertwiner is unique up to a constant.
Observe that in the next result, no a priori symplectic projection of either $T_1$ or $T_2$ is used.

\begin{theorem}\label{thm:uniqueness}
Let $S\in\Sp(d,\C)$ and suppose that $0\neq T_1,T_2\in\cB(L^2(\R^d))$ both satisfy \eqref{eq:full-covariance} with the same matrix $S$.
Then there exists $c\in\C\setminus\{0\}$ such that
\begin{equation}\label{eq:T1-cT2}
  T_1=c\cdot T_2 \qquad\text{on }L^2(\R^d).
\end{equation}
\end{theorem}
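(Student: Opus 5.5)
Fix one Siegel Gaussian, say $\Phi_{Z_0}$ with $Z_0=iI$. By Proposition \ref{prop:Gaussian-determination}, applied to $T_1$ and to $T_2$ (both nonzero bounded intertwiners of the same $S$), we have
\[
T_j\Phi_{Z_0}=c_j\Phi_{\cM_S(Z_0)},\qquad c_j\neq0,\quad j=1,2.
\]
The target Gaussian $\Phi_{\cM_S(Z_0)}$ depends only on $S$, not on $T_j$. Set $c=c_1/c_2\neq0$ and $R=T_1-cT_2\in\cB(L^2(\R^d))$. The plan is to show $R=0$.

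First, $R$ satisfies \eqref{eq:full-covariance}. The relation is linear in $T$, and $\rho(Sz;\tau)$ acts linearly on $\Shilovp$, so for $f\in L^2$
\[
R\rho(z;\tau)f=T_1\rho(z;\tau)f-cT_2\rho(z;\tau)f=\rho(Sz;\tau)(T_1f-cT_2f)=\rho(Sz;\tau)Rf
\]
in $\Shilovp$. Moreover $R\Phi_{Z_0}=(c_1-cc_2)\Phi_{\cM_S(Z_0)}=0$.

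Next, argue by contradiction. If $R\neq0$, then $R$ is a nonzero bounded operator satisfying \eqref{eq:full-covariance}, and Lemma \ref{lem:T-injective} says $R$ is injective. This contradicts $R\Phi_{Z_0}=0$ with $\Phi_{Z_0}\neq0$. Hence $R=0$, that is $T_1=cT_2$ with $c\neq0$. Equivalently, and more directly: $\ker R$ is a closed subspace invariant under $\rho(z;\tau)$ for real $z$, and it contains $\Phi_{Z_0}\neq0$. By irreducibility of the Schr\"odinger representation, $\Phi_{Z_0}$ is cyclic, so $\ker R=L^2(\R^d)$.

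There is little genuine obstacle here, since Lemma \ref{lem:T-injective} and Proposition \ref{prop:Gaussian-determination} are stated for arbitrary nonzero bounded intertwiners. The only point needing care is that invariance of $\ker R$ uses equality in $\Shilovp$: if $Rf=0$, then $\rho(Sz;\tau)Rf=0$ in $\Shilovp$, so $R\rho(z;\tau)f$ is an $L^2$ function vanishing as an ultra-tempered distribution. It therefore vanishes in $L^2$ by the injectivity of $L^2\hookrightarrow\cS'\hookrightarrow\Shilovp$ from Lemma \ref{lem:G-dense}.

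Combined with Theorem \ref{thm:full-L2-positive}, this gives the one-dimensionality claim of Theorem \ref{thm:main}, with $T_2=\widehat S$.
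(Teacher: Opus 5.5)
Your proof is correct and follows the paper's argument: both fix $\Phi_{iI}$, use Proposition \ref{prop:Gaussian-determination} to get $T_j\Phi_{iI}=c_j\Phi_{\cM_S(iI)}$ with $c_j\neq 0$, and conclude by irreducibility of the Schr\"odinger representation. The paper checks directly that $T_1$ and $cT_2$ agree on the orbit $\{\rho(z;0)\Phi_{iI}\}$ and then uses density plus boundedness; you instead note that $\ker(T_1-cT_2)$ is a closed invariant subspace containing $\Phi_{iI}$, which is Lemma \ref{lem:T-injective} applied to $T_1-cT_2$, and this is the same step in different form.
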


\begin{proof}
Choose any $Z\in\HH_d$, for instance $Z=iI_d$.
By Proposition \ref{prop:Gaussian-determination},
\begin{equation}
  T_j\Phi_Z=c_j\cdot \Phi_{\cM_S(Z)}, \qquad c_j\neq0,\; j=1,2.
\end{equation}
Thus, with $c=c_1/c_2$, $T_1\Phi_Z=c\cdot T_2\Phi_Z$.
For every real $z\in\R^{2d}$,
\begin{align}
  T_1\rho(z;0)\Phi_Z=\rho(Sz;0)T_1\Phi_Z=c\cdot\rho(Sz;0)T_2\Phi_Z=c\cdot T_2\rho(z;0)\Phi_Z.
\end{align}
The space $\overline{\mathrm{span}\{\rho(z;0)\Phi_Z:z\in\R^{2d}\}}$ is a nonzero closed invariant subspace for the irreducible Schr\"odinger representation $\rho$, hence it is all of $L^2$.
Since $T_1$ and $T_2$ are bounded, \eqref{eq:T1-cT2} follows.
\end{proof}

\begin{remark}\label{rem:minimal-uniqueness}
For Theorem \ref{thm:uniqueness}, the validity of the intertwining relation on $L^2(\rd)$ is again stronger than necessary.
It is enough that the two operators satisfy the same intertwining relation on the $\mathrm{span}\{\rho(z;0)\Phi_Z\}_{z\in\rdd}$ of one fixed nonzero Siegel Gaussian $\Phi_Z$.
\end{remark}

As a corollary, we identify the intertwiners of the complexified Schr\"odinger representation with the operators in the oscillator semigroup.

\begin{corollary}\label{cor:canonical-identification}
Let $S\in\Sp(d,\C)$ and let $T$ be a bounded Schr\"odinger intertwiner of $S$.
Then $S\in\Sp_+(d,\C)$ and, for every oscillator-semigroup lift $\widehat S$ of $S$, it is $T=c\cdot \widehat S$ for some $c\in\C\setminus\{0\}$.
\end{corollary}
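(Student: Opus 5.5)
The corollary follows by chaining the three main results of the preceding sections, so the plan is short. Let $T\neq0$ be a bounded Schr\"odinger intertwiner of $S\in\Sp(d,\C)$, that is, $T\in\cB(L^2(\R^d))$ satisfies \eqref{eq:full-covariance} for every $f\in L^2(\R^d)$, $z\in\R^{2d}$, $\tau\in\R$. The first step is to apply Theorem \ref{thm:necessity} directly to $T$. This gives $S\in\Sp_+(d,\C)$, so $S$ admits oscillator-semigroup lifts $\widehat S\in\Mp_+(d,\C)$, the two of them differing by a sign.

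Next, fix any such lift $\widehat S$. Theorem \ref{thm:full-L2-positive} shows that $\widehat S$ satisfies \eqref{eq:full-covariance} for every $f\in L^2(\R^d)$, with equality in $\Shilovp$. Hence $\widehat S$ is itself a bounded intertwiner for the same matrix $S$. It is nonzero: elements of $\Mp_+(d,\C)$ are nonzero operators, as one sees for instance from Proposition \ref{intro.propGRT} together with the fact that $\widehat S$ maps Gaussians to nonzero Gaussians. If one prefers to avoid this, nonvanishing is also immediate on the generators (Fourier transform, $\mathfrak T_E$, $\mathfrak p_Q$), each of which is injective, so every composition is injective as well.

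Finally, apply Theorem \ref{thm:uniqueness} with $T_1=T$ and $T_2=\widehat S$. Both are nonzero, bounded, and intertwine with the same $S$, so $T=c\cdot\widehat S$ for some $c\in\C\setminus\{0\}$. The constant is nonzero because $T\neq0$; explicitly, $c=c_1/c_2$ is a ratio of the nonzero Gaussian coefficients from Proposition \ref{prop:Gaussian-determination}. Since the lift was arbitrary and the two lifts differ by a sign, the statement holds for every lift, with $c$ replaced by $-c$ for the other one.

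The only point requiring any care is checking that $\widehat S\neq0$, so that the hypotheses of Theorem \ref{thm:uniqueness} are met. As explained above, this follows from the injectivity of the generators of $\Mp_+(d,\C)$ (the multiplication operators $\mathfrak p_Q$ with $Q\in\HH_d$ act by a nowhere-vanishing function), so the corollary presents no real obstacle.
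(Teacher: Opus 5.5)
Your proposal is correct and follows exactly the paper's proof. Theorem \ref{thm:necessity} gives positivity, Theorem \ref{thm:full-L2-positive} makes $\widehat S$ a bounded intertwiner, and Theorem \ref{thm:uniqueness} then yields $T=c\cdot\widehat S$. Your extra check that $\widehat S\neq0$ is a hypothesis of Theorem \ref{thm:uniqueness} that the paper leaves implicit, and your argument via injectivity of the generators $\Fur$, $\mathfrak T_E$, $\mathfrak p_Q$ settles it cleanly.
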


\begin{proof}
By Theorem \ref{thm:necessity}, $S$ is positive.
By Theorem \ref{thm:full-L2-positive}, $\widehat S$ is itself a bounded intertwiner of $S$.
Apply Theorem \ref{thm:uniqueness}.
\end{proof}

Finally, we collect the results shown so far into the proof of Theorem \ref{thm:main}.

\subsection{Proof of Theorem \ref{thm:main}}
The implication $(ii)\Rightarrow(i)$ is Theorem \ref{thm:necessity}.
Conversely, if $S\in\Sp_+(d,\C)$, take $\widehat S\in\Mp_+(d,\bC)$ so that $\pi^{Mp}_+(\widehat S)=S$.
Theorem \ref{thm:full-L2-positive} shows that it satisfies the full intertwining relation \eqref{eq:full-covariance}.
The one-dimensionality statement follows from Theorem \ref{thm:uniqueness}, while the final part of the statement is Corollary \ref{cor:canonical-identification}.

\begin{appendix}
\section{Proof of Lemma \ref{lem:exp-multiplier}}\label{appendix:A}

By the Gaussian characterization of $\Shilov$, see e.g.\ \cite[Proposition~2.2(a)]{Petersson}, for every $\varphi\in\Shilov$ there exist $C,A,a>0$ such that
\begin{equation}\label{eq:appendix-gaussian}
    |\partial^\mu\varphi(t)| \leq C A^{|\mu|}(\mu!)^{1/2}e^{-a|t|^2}, \qquad \mu\in\N^d,\ t\in\R^d.
\end{equation}
Fix $\zeta_0\in\bC$ and set $\psi(t)=e^{\zeta_0\lambda\cdot t}\varphi(t)$.
Since, by Young's inequality,
\begin{equation}
    e^{\Re(\zeta_0\lambda)\cdot t} \leq e^{|\zeta_0||\lambda||t|} \leq C_0 e^{a|t|^2/2},
\end{equation}
Leibniz' formula and \eqref{eq:appendix-gaussian} give, after possibly changing the constants,
\begin{equation}\label{eq:psi-gaussian-estimate}
    |\partial^\mu\psi(t)| \leq C_1 A_1^{|\mu|}(\mu!)^{1/2}e^{-a_1|t|^2}, \qquad \mu\in\N^d,
\end{equation}
for some $C_1,A_1,a_1>0$.
Indeed,
\begin{equation}
\begin{split}
|\partial^\mu\psi(t)| &= \Big| e^{\zeta_0\lambda\cdot t} \sum_{\kappa\leq\mu} \binom{\mu}{\kappa} (\zeta_0\lambda)^\kappa \partial^{\mu-\kappa}\varphi(t) \Big| \leq e^{\Re(\zeta_0\lambda)\cdot t} \sum_{\kappa\leq\mu} \binom{\mu}{\kappa} |(\zeta_0\lambda)^\kappa|\cdot|\partial^{\mu-\kappa}\varphi(t)|.
\end{split}
\end{equation}
Setting $b:=|\zeta_0|\cdot|\lambda|$, we obtain
\begin{equation}
\begin{split}
|\partial^\mu\psi(t)| &\leq C e^{b|t|-a|t|^2} \sum_{\kappa\leq\mu} \binom{\mu}{\kappa} b^{|\kappa|} A^{|\mu-\kappa|} ((\mu-\kappa)!)^{1/2} \leq C e^{b|t|-a|t|^2} (\mu!)^{1/2} \sum_{\kappa\leq\mu} \binom{\mu}{\kappa} b^{|\kappa|} A^{|\mu|-|\kappa|}.
\end{split}
\end{equation}
By the multi-index binomial formula,
\begin{equation}
\sum_{\kappa\leq\mu} \binom{\mu}{\kappa} b^{|\kappa|} A^{|\mu|-|\kappa|} = (A+b)^{|\mu|}.
\end{equation}
Moreover, $b|t|-\frac a2|t|^2 \leq \frac{b^2}{2a}$, and therefore $e^{b|t|-a|t|^2} \leq e^{b^2/(2a)}e^{-a|t|^2/2}$.
Hence
\begin{equation}
|\partial^\mu\psi(t)| \leq C e^{b^2/(2a)} (A+b)^{|\mu|} (\mu!)^{1/2} e^{-a|t|^2/2}.
\end{equation}
Thus \eqref{eq:psi-gaussian-estimate} holds with
\begin{equation}
C_1=C e^{b^2/(2a)}, \qquad A_1=A+b, \qquad a_1=\frac a2.
\end{equation}
Consider now the $n$th Taylor coefficient in \eqref{eq:exp-series},
\begin{equation}
    f_n(t)=\frac{(\lambda\cdot t)^n}{n!}\psi(t).
\end{equation}
For $\nu\leq\gamma$, setting $k=|\nu|$, we have
\begin{equation}
    \partial^\nu(\lambda\cdot t)^n = \frac{n!}{(n-k)!}\lambda^\nu (\lambda\cdot t)^{n-k}, \qquad k\leq n,
\end{equation}
whereas the derivative vanishes for $k>n$.
Hence, by \eqref{eq:psi-gaussian-estimate},
\begin{equation}
\begin{split}
    |t^\beta\partial^\gamma f_n(t)| &\leq C_1 \sum_{\substack{\nu\leq\gamma\\|\nu|\leq n}} \binom{\gamma}{\nu} \frac{|\lambda|^{|\nu|}}{(n-|\nu|)!} |t^\beta|\cdot|\lambda\cdot t|^{n-|\nu|} A_1^{|\gamma-\nu|} ((\gamma-\nu)!)^{1/2} e^{-a_1|t|^2}.
\end{split}
\end{equation}
We now split the Gaussian factor as
\begin{equation}
    e^{-a_1|t|^2} = e^{-a_1|t|^2/2}e^{-a_1|t|^2/2}.
\end{equation}
For $n\in\N$, the function
\begin{equation}
    r\longmapsto r^n e^{-a_1r^2/2}, \qquad r\geq0,
\end{equation}
attains its maximum at $r=(n/a_1)^{1/2}$, and therefore
\begin{equation}
    \sup_{r\geq0}r^n e^{-a_1r^2/2} = \left(\frac{n}{a_1e}\right)^{n/2}.
\end{equation}
Since $n!\geq \left( n/e\right)^n$, we obtain
\begin{equation}
    \sup_{r\geq0}r^n e^{-a_1r^2/2} \leq a_1^{-n/2}(n!)^{1/2}.
\end{equation}
Hence, using
\begin{equation}
    |t^\beta|e^{-a_1|t|^2/2} = \prod_{j=1}^d |t_j|^{\beta_j}e^{-a_1t_j^2/2},
\end{equation}
we find
\begin{equation}
\begin{split}
    \sup_{t\in\R^d} |t^\beta|e^{-a_1|t|^2/2} &= \prod_{j=1}^d \sup_{t_j\in\R} |t_j|^{\beta_j}e^{-a_1t_j^2/2}\leq a_1^{-|\beta|/2}(\beta!)^{1/2}.
\end{split}
\end{equation}
Thus
\begin{equation}
    \sup_{t\in\R^d} |t^\beta|e^{-a_1|t|^2/2} \leq B_0^{|\beta|}(\beta!)^{1/2}, \qquad B_0:=\max\{1,a_1^{-1/2}\}.
\end{equation}
Similarly, since the expression depends only on $|t|$,
\begin{equation}
\begin{split}
    \sup_{t\in\R^d}|t|^m e^{-a_1|t|^2/2} &= \sup_{r\geq0}r^m e^{-a_1r^2/2}= \left(\frac{m}{a_1e}\right)^{m/2} \leq a_1^{-m/2}(m!)^{1/2}.
\end{split}
\end{equation}
Consequently,
\begin{equation}
    \sup_{t\in\R^d}|t|^m e^{-a_1|t|^2/2} \leq B_1^m(m!)^{1/2}, \qquad B_1:=\max\{1,a_1^{-1/2}\}.
\end{equation}
Therefore, writing $m=n-k$ yields
\begin{equation}
\begin{split}
    \frac{1}{m!} \sup_{t\in\R^d} |\lambda\cdot t|^m e^{-a_1|t|^2/2} &\leq \frac{|\lambda|^m}{m!} \sup_{t\in\R^d} |t|^m e^{-a_1|t|^2/2} \leq \frac{(B_1|\lambda|)^m}{(m!)^{1/2}}.
\end{split}
\end{equation}
Moreover,
\begin{equation}
    \frac1{\sqrt{(n-k)!}} = \frac1{\sqrt{n!}} \left(\frac{n!}{(n-k)!}\right)^{1/2} \leq \frac{2^{n/2}\sqrt{k!}}{\sqrt{n!}}.
\end{equation}
Since $k=|\nu|$, the multinomial formula gives $k!\leq d^k\nu!$, and therefore
\begin{equation}
\begin{split}
    \binom{\gamma}{\nu} (\nu!)^{1/2}((\gamma-\nu)!)^{1/2} &= (\gamma!)^{1/2} \binom{\gamma}{\nu}^{1/2}\leq (\gamma!)^{1/2} \binom{\gamma}{\nu}.
\end{split}
\end{equation}
Summing over $\nu\leq\gamma$, all factors depending on $\nu$ can thus be absorbed into an exponential in $|\gamma|$.
We obtain constants $C_2,B_2,B_3,C_3>0$, independent of $n,\beta,\gamma$, such that
\begin{equation}\label{eq:fn-estimate}
    \sup_{t\in\R^d} |t^\beta\partial^\gamma f_n(t)| \leq C_2 B_2^{|\beta|} B_3^{|\gamma|} (\beta!\gamma!)^{1/2} \frac{C_3^n}{(n!)^{1/2}}.
\end{equation}
Choosing
\begin{equation}
    h\geq\max\{B_2,B_3\},
\end{equation}
it follows that
\begin{equation}
    \|f_n\|_h \leq C_2\frac{C_3^n}{(n!)^{1/2}}.
\end{equation}
Therefore, uniformly for $|\zeta-\zeta_0|\leq R$,
\begin{equation}
    \left\| \frac{(\zeta-\zeta_0)^n}{n!} (\lambda\cdot t)^n e^{\zeta_0\lambda\cdot t}\varphi(t) \right\|_h \leq C_2\frac{(RC_3)^n}{(n!)^{1/2}}.
\end{equation}
Therefore, the Taylor series \eqref{eq:exp-series} converges absolutely and uniformly on compact subsets of $\bC$, with values in the fixed Banach step $\Shilovh$.
In particular, $ \zeta\longmapsto e^{\zeta\lambda\cdot t}\varphi(t)$ is an entire $S_{1/2}^{1/2}$-valued function.
The same argument applies to the termwise differentiated series.
Indeed, on $|\zeta-\zeta_0|\leq R$ its terms are bounded, up to a constant depending on $R$, by
\begin{equation}
    n\frac{(RC_3)^n}{(n!)^{1/2}},
\end{equation}
and the corresponding series is again summable.
Hence differentiation may be performed termwise in $\Shilovh$, which yields \eqref{eq:exp-derivative}.
Finally, the same estimates, with $\lambda$ replaced by $-\lambda$, show that multiplication by $e^{-\zeta\lambda\cdot t}$ acts on $\Shilov$ as well.
Since $e^{-\zeta\lambda\cdot t}e^{\zeta\lambda\cdot t}=1$, the two multipliers are inverse to each other.
This concludes the proof.

\end{appendix}

\end{document}